\documentclass[11pt, twoside]{article}
\usepackage{amsmath,amsthm}
\usepackage{amsfonts}
\usepackage{amssymb,latexsym}
\usepackage{enumerate}
\usepackage{xcolor}
\usepackage{csquotes}
\usepackage{url}
\newtheorem{theorem}{Theorem}[section]

\newtheorem{proposition}[theorem]{Proposition}
\newtheorem{lemma}[theorem]{Lemma}
\newtheorem{definition}[theorem]{Definition}

\numberwithin{equation}{section}
\def\ind{1{\hskip -3 pt}\hbox{\textsc{I}}}
\def\n{\noindent}

\def\ve{\varepsilon}
\def\va{\varphi}
 
\def\e{\epsilon}
\def\o{\omega}
\def\O{\Omega}

\def\w{\wedge}
\def\cn{\mathbb C^n}

\def\R{\mathbb{R}}

\newcommand{\f}{\varphi}

\newcommand{\SH}{\mathcal{SH}}

\newcommand{\setdef}{\ \big\vert \ }
\newcommand{\Capa}{{\rm Cap}}

\newcommand{\SHXo}{\SH_m(X,\omega)}
\newcommand{\vep}{\varepsilon}

\newcommand{\EcXo}{\mathcal{E}(X,\omega,m)}

\newcommand{\Capm}{\Capa_{\omega,m}}

\begin{document}
\setlength{\baselineskip}{18truept}
\pagestyle{myheadings}

\markboth{   Mau Hai Le* and  Van Phu Nguyen**}{Convergence in $(\omega,m)$-capacity in the class $\mathcal{E}(X, \omega,m)$ }
\title {Convergence in $(\omega,m)$-capacity in the class $\mathcal{E}(X, \omega,m)$ on compact K\"ahler manifolds}
\author{
	 Mau Hai Le* and Van Phu Nguyen**
	\\ *Department of Mathematics, Hanoi National University of Education,\\ Hanoi, Vietnam.\\
	** Faculty of Natural Sciences, Electric Power University,\\ Hanoi,Vietnam;\\
	\\E-mail: mauhai@hnue.edu.vn and phunv@epu.edu.vn }

\date{}
\maketitle

\renewcommand{\thefootnote}{}

\footnote{2020 \emph{Mathematics Subject Classification}: 32U05, 32Q15, 32W20.}

\footnote{\emph{Key words and phrases}: $(\omega,m)-$subharmonic functions, K\"ahler manifolds, complex Hessian equations.}

\renewcommand{\thefootnote}{\arabic{footnote}}
\setcounter{footnote}{0}

\begin{abstract}
In this paper, we establish the weak*-convergence of a sequence of the complex Hessian measures $H_m(u_j)$ to the complex Hessian measure $H_m(u)$ in the class $\mathcal{E}(X,\o,m)$ under hypotheses that $u_j$ is convergent to $u$ in $(\omega,m)$-capacity.
\end{abstract}

\section{Introduction}

 The complex Monge-Amp\`ere operator $(dd^c(\bullet))^n$ plays an important role in pluripotential theory and has been extensively studied by  many authors. This operator was first introduced and investigated by Bedford and Taylor \cite{BT76, BT82}. One of the main problems concerning this operator is to determine the conditions under which it is continuous in the weak*-topology. More precisely, given a sequence $\{u_j\}$ of bounded plurisubharmonic functions on an open subset $\O\subset \cn$, suppose that $u_j$ converges in some sense to a bounded plurisubharmonic function $u$. Under what conditions does $(dd^c u_j)^n$ converge to $(dd^c u)^n$ in the weak*-topology?\\  
 \n Bedford and Taylor \cite{BT82} showed that if $\{u_j\}_{j\geq 1}$ is decreasing to $u$ or increasing to $u$ almost everywhere with respect to the Lebesgue measure, then $(dd^c u_j)^n$ is convergent in the weak*-topology to $(dd^c u)^n$. Beyond the above-mentioned conditions, it is natural to ask whether weaker assumptions can ensure that $(dd^c u_j)^n$ converge to $(dd^c u)^n$ in the weak*-topology.  In \cite{Ce83}, by constructing an example, Cegrell showed that this operator is not continuous in the $L^1_{loc}$-topology. Concerning this problem,  Xing \cite{Xi96} introduced the notion of the convergence in $C_n$ and $C_{n-1}$-capacity of sequences $\{u_j\}_{j\geq1}$ of plurisubharmonic functions converging to a plurisubharmonic function $u$. Let $\O$ be an open subset of $\cn$ and let $\{u_j\}_{j\geq 1}$ be a sequence of plurisubharmonic functions on $\O$, with $u\in PSH(\O)$, where $PSH(\O)$ denotes the set of plurisubharmonic functions on $\O$. We say that the sequence $\{u_j\}$ converges to $u$ in $C_n$-capacity on $\O$(resp. $C_{n-1}$-capacity) if for all $\delta>0$ and all compact subsets $K\Subset\O$ we have
$$\lim\limits_{j\to \infty}C_n\Bigl(K\cap\{|u_j-u|>\delta\},\O\Bigl)=0,\eqno(1.1)$$

\n(resp. $\lim\limits_{j\to \infty}C_{n-1}\Bigl(K\cap\{|u_j-u|>\delta\},\O\Bigl)=0$), where $C_n$-capaccity (resp. $C_{n-1}$-capacity) was introduced and investigated in \cite{BT82} and \cite{Xi96}.  Xing \cite[Theorem 1]{Xi96} proved that if $\{u_j\}_j$ is a sequence of uniformly bounded plurisubharmonic functions that converges in $C_{n-1}$-capacity to a function  $u$ then $(dd^c u_j)^n$ converges to $(dd^c u)^n$ in the weak*-topology. \\
\n After that, U. Cegrell \cite{Ce98, Ce04} introduced several  function classes $\mathcal{F}(\Omega)$ and $\mathcal{E}(\Omega)$, which are not necessarily locally bounded, on which the complex Monge-Ampère operator is well-defined. Subsequently, Xing \cite[Theorem 3.4, Theorem 3.5]{Xi08b}, Cegrell \cite{Ce12} and L. M. Hai, P. H. Hiep, N. X. Hong, N. V, Phu \cite[Proposition 2.2]{HHHP14} extended the result in \cite{Xi96} from the class of bounded plurisubharmonic functions to the Cegrell class. \\
	B{\l}ocki \cite{Bl1} and Sadullaev–Abdullaev \cite{SA12} introduced the class of $m$-subharmonic functions as a natural extension of plurisubharmonic functions, along with the complex $m$-Hessian operator $H_m(.) = (dd^c.)^m \wedge \beta^{n-m}$, which generalizes the classical Monge–Amp\`ere operator $(dd^c.)^n$. Subsequently, in \cite{Ch12}, Chinh introduced the Cegrell-type classes $\mathcal{F}_m(\Omega)$ and $\mathcal{E}_m(\Omega)$, which are not necessarily locally bounded, and established that the complex $m$-Hessian operator is well defined on these classes. The relationship between convergence in $m$-capacity of $m$-subharmonic functions and the convergence of the corresponding Hessian operators was studied by the second author and collaborators in  papers \cite[Theorem 3.8]{HP17},  \cite[Corollary 2]{PDPu} and \cite[Corollary 3.3]{APD}.\\
\n In \cite{Xi09}, Xing studied the continuity of the complex Monge-Amp\`ere operator on compact K\"ahler manifolds. The main results in \cite{Xi09} were established for the class $\mathcal{E}(X,\o)$, which was introduced and investigated by Guedj and Zeriahi in \cite{GZ07}. For convenience to readers, now we recall some notions and results in \cite{Xi09} because our results in this note rely on ideals of \cite{Xi09};  However, we will work with the larger class $\mathcal{E}(X,\o,m)$.   Let $X$ be a compact K\"ahler manifold of dimension $n$ equipped with the fundamental form $\o$ given in local coordinates by
$$ \o=\frac{i}{2}\sum\limits_{\alpha,\beta} g_{\alpha,\bar{\beta}} dz^{\alpha}\wedge d\bar{z}^{\beta},$$

\n where $(g_{\alpha,\bar{\beta}})$ is a positive definite Hermitian matrix and $d\o=0$. The smooth volume form associated to the K\"ahler metric is given by the {\it n}th wedge product $\o^n$. By $PSH(X,\o)$ we denote the set of upper semi-continuous functions $u:X\longrightarrow \mathbb{R}\cup\{-\infty\}$ such that $u$ is integrable in $X$ with respect to the volume form $\o^n$ and $\o+dd^c u\geq 0$ in the current sense on $X$. As in \cite{BT87} for $u\in PSH(X,\o)$ we know that the complex Monge-Amp\`ere operator $(\o+dd^c u)^n$ is well-defined on the set $\{u>-\infty\}$. In \cite{GZ07}, Guedj and Zeriahi introduced the class $\mathcal{E}(X,\o)$. That is the subfamily of functions $u$ in $PSH(X,\o)$ such that $\int\limits_{\{ u>-\infty\}}(\o+dd^c u)^n=\int\limits_{X}\o^n$. We refer readers to \cite{GZ07} for important properties of this class. For brevity we use the notations $\o_u=\o+dd^c u$ and $\o_u^n= (\o+dd^c u)^n$. As in \cite{GZ07} we recall the notion of the Monge-Amp\`ere capacity $Cap_{\o}$ associated to $\o$ on a compact K\"ahler manifold $X$. For each a Borel $E\subset X$ the capacity of $E$ denoted by $Cap_{\o}(E)$ and is defined by
$$Cap_{\o}(E)=\sup\Biggl\{\int\limits_{E}\o_{u}^n : \  \ u\in PSH(X,\o),\  \ -1\leq u\leq 0.\Biggl\}$$

\n Next, we recall the notion of the convergence of a sequence $\{u_j\}_{j\geq 1}\subset PSH(X,\o)$ to a function $u\in PSH(X,\o)$ in the $Cap_{\o}$ on a compact K\"ahler manifold $X$. We say that a sequence $u_j$ in $PSH(X,\o)$ is called convegence in the capacity $Cap_{\o}$ to a function $u\in PSH(X,\o)$ if for every $\delta>0$ the following condition holds
$$\lim\limits_{j\to \infty}Cap_{\o}\Bigl(\{z\in X: |u_j(z)- u(z)|>\delta\}\Bigl)=0.$$

\n From this notion in \cite[Theorem 1]{Xi09}, Xing proved that if $u_j, u\in\mathcal{E}(X,\o)$ are such that $\{u_j\}$ converge to $u$ in $Cap_{\o}$ on $X$, then the complex Monge-Amp\`ere measures $(\o+dd^c u_j)^n$ converge to $(\o+dd^c u)^n$ weakly in $X$.  Note that as in an example of Cegrell mentioned above, the convergence in $L^1_{loc}$ of a sequence of plurisubharmonic functions $\{u_j\}$ to a plurisubharmonic function $u$ is not ensure that $(dd^c u_j)^n$ is weak*-convergent to $(dd^c u)^n$.  However, in \cite[Theorem 4 and Theorem 5 ]{Xi09}, the author added several additional assumptions to ensure that convergence in 
 $L^1(X)$ implies the convergence of the corresponding Monge–Ampère operator. Moreover, in \cite[Theorem 8]{Xi09}, under several additional assumptions, Xing proved that the convergence in $L^1(X)$ is equivalent to the convergence of the corresponding Monge–Ampère operator. Recently, Dinew-Chinh  \cite{DC15} and Chinh-Dong \cite{CD15}  introduced and investigated the class $\mathcal{E}(X,\o,m)$ which is strictly larger than the class $\mathcal{E}(X,\o)$ (see Theorem 5.12 and Example 5.13 in \cite{CD15}).  Building on this line of research, we generalize \cite[Theorem 8]{Xi09} by extending this result from class  $\mathcal{E}^1(X,\o)\subset \mathcal{E}(X,\o)$ to class $\mathcal{E}(X,\o,m).$ 

 \n  Our main result in this note are the following theorem. 
\begin{theorem}\label{2.4} Let $u_j,u\in\mathcal{E}(X,\omega, m)$. Assume that $H_m({u_j})\leq \mu$ and $H_m(u)\leq \mu$ where $\mu$ is a positive measure satisfying $\mu(X)=\int_{X}\o^n=1$ and $ \mu\ll \Capm$. Let also $\sup\limits_Xu_j=\sup\limits_Xu=0$. Then the following three statements are equivalent:
	
	i) $u_j\to u$ in $\Capm$;
	
	ii) $u_j\to u$ in $L^1(X,\o)$;
	
	iii) $H_m(u_j)\to H_m(u)$ weakly as $j\to\infty$.
\end{theorem}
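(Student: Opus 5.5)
We prove the cycle i) $\Rightarrow$ iii) $\Rightarrow$ ii) $\Rightarrow$ i), following Xing's scheme in \cite{Xi09} for $\mathcal{E}(X,\o)$, with $PSH(X,\o)$, $\o_u^n$ and $Cap_\o$ replaced by $\SHXo$, $H_m(u)=(\o+dd^c u)^m\w\o^{n-m}$ and $\Capm$. We use the standard facts for $\mathcal{E}(X,\o,m)$ from \cite{DC15, CD15}: the comparison principle, the fact that $H_m$ puts no mass on $m$-polar sets, the domination principle, and the quasicontinuity of $(\o,m)$-subharmonic functions with respect to $\Capm$.

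\emph{i) $\Rightarrow$ iii).} Here we adapt Xing's proof of \cite[Theorem 1]{Xi09}, using the canonical cut-offs $\max(v,-k)$. For a continuous test function $\chi$ we split
$$\int\chi\,H_m(u_j)-\int\chi\,H_m(u) =\int\chi\bigl(H_m(u_j)-H_m(\max(u_j,-k))\bigr)+\int\chi\bigl(H_m(\max(u_j,-k))-H_m(\max(u,-k))\bigr)+\int\chi\bigl(H_m(\max(u,-k))-H_m(u)\bigr).$$
\begin{enumerate}[(a)]
\item The outer two terms are bounded by $2\|\chi\|_\infty\,\mu(\{u_j\le -k\})$ and $2\|\chi\|_\infty\,\mu(\{u\le -k\})$. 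This uses $H_m(\max(v,-k))=H_m(v)$ on $\{v>-k\}$ and the fact that both measures have total mass $1$. Since $\mu\ll\Capm$, it suffices to show $\sup_j\Capm(\{u_j<-k\})\to 0$ as $k\to\infty$.
\item For this, $\sup_X u_j=0$ together with $u_j\to u$ in capacity gives $L^1$-compactness, hence a uniform bound $\Capm(\{u_j<-k\})\le C/k$ via the usual estimate $\Capm(\{v<-k\})\le (\int|v|\o^n+C)/k$.
\item The middle term concerns bounded functions converging in $\Capm$. It is handled by the bounded-case argument: integrate by parts, use quasicontinuity, and apply Xing's $C_{n-1}$-type trick, namely that $\int_{\{|u_j-u|>\delta\}}H_m$ of bounded functions is controlled by $k^m\Capm(\{|u_j-u|>\delta\})$.
\end{enumerate}

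\emph{iii) $\Rightarrow$ ii).} $L^1$-compactness of $\{v\in\SHXo:\sup_X v=0\}$ reduces this to showing that every $L^1$ cluster point $v$ of $(u_j)$ equals $u$. First, $\sup_X v=0$ by Hartogs. Next, let $w_j=(\sup_{k\ge j}u_k)^*$, which decreases to $v$. Then $w_j\ge u_j$ lies in $\mathcal{E}(X,\o,m)$, so $v\in\mathcal{E}(X,\o,m)$ once we show $H_m(v)$ has full mass; the bound $H_m(u_k)\le\mu$ with $\mu\ll\Capm$ is what prevents mass loss. We then aim to show $H_m(v)=H_m(u)$ and invoke the uniqueness theorem for the Hessian equation in $\mathcal{E}(X,\o,m)$ (\cite{DC15, CD15}) to conclude $v=u+c$, and hence $v=u$ by the sup normalization. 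To identify $H_m(v)$, we plan to prove $H_m(w_j)\ge\min(\ldots)$-type inequalities using $H_m(\max(a,b))\ge \ind_{\{a\ge b\}}H_m(a)+\ind_{\{a<b\}}H_m(b)$. Combined with the continuity along decreasing sequences and $\mu\ll\Capm$, this yields $H_m(v)\ge\lim H_m(u_j)=H_m(u)$, and equality follows since both have mass $1$. \textbf{This identification step is the main obstacle.} It requires a uniform-integrability argument from $H_m(u_j)\le\mu$ with $\mu\ll\Capm$, namely that $\mu$ does not charge $\{w_j>u_k+\delta\}$ in the limit; this is exactly where Xing uses the domination hypothesis.

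\emph{ii) $\Rightarrow$ i).} We argue by contradiction, passing to subsequences. Take a subsequence such that $\Capm(\{|u_j-u|>\delta\})\ge\epsilon$ and $u_j\to u$ a.e.
\begin{enumerate}[(a)]
\item The envelopes $w_j=(\sup_{k\ge j}u_k)^*$ decrease to $u$, so $w_j\to u$ in $\Capm$. It therefore suffices to control $\Capm(\{u_j<w_j-\delta\})$.
\item For $\varphi\in\SHXo$ with $-1\le\varphi\le 0$, the comparison principle applied to $u_j$ and $w_j-\delta+\delta\varphi$ gives
$$\delta^m\int_{\{u_j<w_j-\delta\}}H_m(\varphi)\le\int_{\{u_j<w_j-\delta+\delta\varphi\}}H_m(u_j)\le\mu(\{u_j<w_j-\delta/2\}\cup\ldots).$$
\item The right-hand side tends to zero because $\mu\ll\Capm$, $\mu$ has finite mass, and $w_j-u_j\to0$ $\mu$-a.e. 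The last point follows from ii) and the quasicontinuity of $u,u_j$, after passing to a further subsequence.
\end{enumerate}
This contradicts the choice of subsequence and completes the cycle.
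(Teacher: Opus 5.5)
\textbf{Verdict.} Your i)$\Rightarrow$iii) is correct, but iii)$\Rightarrow$ii) and ii)$\Rightarrow$i) have genuine gaps. You also missed an elementary observation that makes the statement immediate.

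\textbf{What works.} In i)$\Rightarrow$iii), the domination bounds the tail terms by $2\|\chi\|_\infty\,\mu(\{u_j\le -k\})$ and $2\|\chi\|_\infty\,\mu(\{u\le -k\})$. These are small uniformly in $j$ by Proposition~\ref{md2.12} and $\mu\ll\Capm$, and the middle term is handled by Proposition~\ref{capacity}.

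\textbf{Gap in iii)$\Rightarrow$ii).} You leave the identification $H_m(v)=H_m(u)$ open, and $v\in\EcXo$ is only asserted. The mechanism you sketch does not work as a general principle. Write $H_m(u_k)=f_k\,\mu$ with $0\le f_k\le 1$. Lower bounds for $H_m$ of a maximum then give only pointwise bounds such as $H_m(w_j)\ge(\inf_{k\ge j}f_k)\,\mu$. Weak convergence $f_k\mu\to H_m(u)$ gives no lower bound on $\liminf_k f_k$, since oscillating densities are allowed.

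\textbf{Gaps in ii)$\Rightarrow$i).} Step (b) is not a legitimate comparison: $w_j-\delta+\delta\varphi$ is not $(\omega,m)$-subharmonic, because $\omega+dd^c(w_j+\delta\varphi)=\omega_{w_j}+\delta\omega_\varphi-\delta\omega$. The usual repair uses $(1-s)w_j+s\varphi-c$. After truncating $u_j,w_j$ at $-M$ this forces $s\approx\delta/M$, and it leaves a term of order $(M/\delta)^m\sup_j\mu(\{u_j\le -M\})$ that $\mu\ll\Capm$ does not make small. Avoiding exactly this loss is the purpose of Xing's inductive argument in Proposition~\ref{pro3.7}. There $\beta_j=(u_j+\frac{\delta}{t}u_{jt})/(1+\frac{\delta}{t})$ makes the tail $\int_{\{u_j\le -t\}}\omega_{u_{jt}}\wedge\cdots$ appear with coefficient $1$. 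Step (c) also needs more than quasicontinuity. Convergence in $L^1(X,\omega^n)$ says nothing on $\omega^n$-null sets that $\mu$ may charge, so $\mu(\{u_j<u-\delta\})\to0$ is a Cegrell--Ko{\l}odziej type fact; the paper imports it as Lemma~4.6 of \cite{PDjmaa}.

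\textbf{The observation that closes everything.} Since $u_k\in\EcXo$, we have $H_m(u_k)(X)=\int_X\omega^n=1=\mu(X)$. So $\mu-H_m(u_k)$ is a positive measure of total mass $0$, hence $H_m(u_k)=\mu$ for every $k$; likewise $H_m(u)=\mu$. In your notation $f_k\equiv 1$, so your maximum inequality would in fact give $H_m(w_j)\ge\mu$, hence equality. You already plan to use the uniqueness theorem in $\EcXo$ (Theorem~4.1 of \cite{DC15}); apply it directly to $u_j$ and $u$. It gives that $u_j-u$ is constant, and $\sup_X u_j=\sup_X u=0$ then forces $u_j=u$ for all $j$. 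Thus i), ii) and iii) all hold, and the equivalence is immediate.

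\textbf{Comparison with the paper.} The paper does not take this shortcut; it runs Xing's general scheme:
\begin{enumerate}[(a)]
\item ii)$\Rightarrow$i) via estimate (**) and Theorem~\ref{2.2};
\item ii)$\Rightarrow$iii) via Proposition~\ref{pro3.7}, Hartogs' lemma and Theorem~\ref{th3.1};
\item iii)$\Rightarrow$ii) via compactness, Theorem~\ref{th3.5} for $v\in\EcXo$, convergence $u_j\to v$ in $\Capm$, Theorem~\ref{th3.1}, and uniqueness;
\item i)$\Rightarrow$ii) via Theorem~\ref{th3.1} followed by iii)$\Rightarrow$ii).
\end{enumerate}
That machinery, not the maximum inequalities, is what your cycle would need if $\mu$ were allowed to have mass larger than $1$.
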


\n The paper is organized as follows. Beside the introduction,  the paper has three sections. In Section 2 we recall the definitions and some basic results concerning to $(\o,m)$-subharmonic functions which were introduced and investigated Dinew-Chinh  in \cite{DC15} and Chinh-Dong in \cite{CD22}. In this section we deal with the class $\mathcal{E}(X,\o,m)$ on a compact K\"ahler manifold $X$ with a K\"ahler smooth metric $\o$.  We also recall the complex Hessian measure $H_m(\bullet)$ on $\mathcal{E}(X,\o,m)$ introduced in \cite{DC15} and \cite{CD22}. The section $\bf 2.3$ of this section is devoted to the presentation the notion of $(\o,m)$-capacity and the convergence in $(\o,m)$-capacity. The notion of a family of positive measures $\{\mu_{\alpha}\}$ is uniformly absolutely  continuous with respect to $(\o,m)$-capacity is also recalled in this section. The main results in this paper is contained in the third section. In this section we focus to prove Theorem \ref{2.4}.

\n{\bf Acknowledgement.} The work is supported from Ministry of Education and Training, Vietnam under the Grant number B2025-CTT-10.
This work is written in our visit in VIASM in the spring 2025. We also thank VIASM for financial support and hospitality.
 
\section{Preliminaries}

\subsection{$m$-subharmonic functions}
Let $M$ be a non-compact K\"ahler manifold of dimension $n$ and  $\omega$ be a K\"ahler form on $M$. Fix an integer $m$ such that $1\leq m\leq n$. We recall the definition of $m$-subharmonic functions for smooth functions as in \cite[Definition 2.1]{DC15}
\begin{definition}\label{def: m-sh smooth}
	A smooth function $u$ is called $m$-subharmonic ($m$-sh for short) on $M$ if the following condition holds in the classical sense
	$$
	(dd^c u)^k \wedge \omega^{n-k} \geq 0 , \ \forall k=1, \cdots, m .
	$$ 
	Equivalently, $u$ is $m$-sh if the vector of eigenvalues $\lambda(x) \in \R^n$ of $dd^c u$ with respect to $\omega$ satisfies
	$$
	S_k(\lambda(x)) \geq 0 , \ \forall x\in M,\ \forall k=1, \ldots, m .
	$$ 
	 
\end{definition}
\n For non-smooth functions, we have the following definition as in \cite[Definition 2.3]{DC15}
\begin{definition}\label{def: m-sh non-smooth}
	Let $u$ be an upper semi-continuous and locally integrable function on $M$.
	Then $u$ is called $m$-sh on $M$ if the following two conditions are satisfied:
	\begin{enumerate}[i)] 
		\item For any collection  $\varphi_1,\ldots,\varphi_{m-1}$ of smooth $m$-sh functions we have
		$$dd^c u\wedge dd^c \f_1\wedge \cdots \wedge dd^c \f_{m-1} \wedge \omega^{n-m}\geq 0, $$
		 in the weak sense of currents ;
		\item  if $v$ is an another function satisfying the above inequality and $v=u$ almost everywhere on $M$ then $u\leq v$. 
	\end{enumerate}
\end{definition}

Note that two $m$-sh functions are the same if they are equal almost everywhere on $M$. Observe also that by G{\aa}rding's inequality Definition \ref{def: m-sh smooth} and Definition \ref{def: m-sh non-smooth} are equivalent for smooth functions. The class of $m$-sh functions on $M$ (with respect to $\omega$) is denoted by 
$\SH_m(M)$.

\subsection{$(\omega,m)$-subharmonic functions}

\n Throughout this note, let $(X,\omega)$ be a compact K\"ahler manifold of complex dimension $n$, endowed with a K"ahler form $\omega = \omega_X$. We normalize so that $\int_X \omega^n = 1$, and fix an integer $m$ with $1 \leq m \leq n$. We recall the definition of $(\omega,m)$-subharmonic functions from \cite[Definition 2.6]{DC15}.

\begin{definition}
	A function $u: X\rightarrow \R\cup\{-\infty\}$ is called $(\omega,m)$-subharmonic ($(\omega,m)$-sh for short) if in any local chart $\Omega$ of $X$, the function 
	$\rho + u$ is $m$-sh, where $\rho$ is a local potential of $\omega$. 
\end{definition}
\n Observe that a smooth function $u$ is $(\omega,m)$-sh if and only if 
$$
(\omega+dd^c u)^k \wedge \omega^{n-k} \geq 0 ,\ \forall k=1,...,m.
$$

\n We denote the set of $(\omega, m)$-sh functions on $X$ by $\SH_m(X,\omega)$ and denote the set of negative $(\omega, m)$-subharmonic functions on $X$ by $\SH^-_m(X,\omega)$.
It follows from \cite[Theorem 1.2]{CD15} that for any $u\in \SHXo$ there exists a decreasing  sequence  of smooth $(\omega,m)$-sh functions on $X$ which converges to $u$. Following the classical pluripotential method of Bedford and Taylor \cite{BT76} one can then define the complex Hessian operator for bounded $(\omega,m)$-sh functions:
$$
H_m(u):=(\omega +dd^c u)^m\wedge \omega^{n-m},
$$
which is a non-negative (regular) Borel measure on $X$.\\
 Dinew and Chinh follow \cite{GZ07} to extend the definition of $H_m$ to unbounded $(\o,m)$-sh functions. Note that, we have
$$
\ind_{\{u>-j\}} H_m(\max(u,-j)) 
$$
is a non-decreasing sequence of positive Borel measures on $X$, for any $u\in \SHXo$, where $\ind_{\{u>-j\}}$ is the characteristic function of the set ${\{u>-j\}}$. Moreover, given any $(\omega,m)$-sh function $u$
$$\forall j\in \mathbb N,\ \int_X\ind_{\{u>-j\}} H_m(\max(u,-j))\leq\int_X\omega^n.$$

\n As in \cite[Definition 2.6]{DC15} and \cite[Definition 5.1]{CD15}, the class $\mathcal{E}(X,\o,m)$ is defined as the set of $(\omega,m)$-sh functions for  which 
$$
\lim_{j\rightarrow\infty}\int_X\ind_{\{u>-j\}} H_m(\max(u,-j))=\int_X\omega^n.
$$ For any $u\in \mathcal{E}(X,\o,m)$ set 
$$H_m(u):= \lim_{j\rightarrow\infty}\ind_{\{u>-j\}} H_m(\max(u,-j)).$$

\n Then as in the above definition we note that $u\in \mathcal{E}(X,\o,m)$ if and only if 
$$\int\limits_{\{u>-\infty\}} (\o+ dd^c u)^m\wedge\o^{n-m}= \int\limits_{X} \o^n.$$

\n By Lemma 5.2 in \cite{CD15} we see that a function $u\in\SH_m(X,\o)$ belongs to $\mathcal{E}(X,\o,m)$ if and only if $\lim\limits_{j\to\infty}\int_{\{u\leq-j\}}H_m(\max(u,-j))=0.$ .\\
\n  Proposition 5.10 in \cite{CD15} implies that $\EcXo$ is a convex set and it is stable under the max operation, that is, if $\varphi, \psi \in SH_m(X,\omega)$ are such that $\va$ or $\psi$ in $\EcXo$ then $\max{(\varphi,\psi)}\in \EcXo.$ 

\n We now recall the comparison principle and the maximum principle in $\EcXo$ which will be an important tool in the sequel (see Theorem 5.5 in \cite{CD15}). Note that, these results are rooted in Theorem 1.5 and Corollary 1.7 in \cite{GZ07} where analogous results for the class $\mathcal{E}(X,\o)$ may be found.
\begin{theorem}\label{comparison principle}
\n (i)	Let $\varphi,\psi \in {\mathcal E}(X,\omega,m)$, then we have 
$$\int_{\{\varphi<\psi\}}H_m(\psi) \leq \int_{\{\varphi<\psi\}} H_m(\varphi).$$
\n (ii) Let $\psi\in\EcXo,\varphi\in\SH_m(X,\o)$ then we have
	$$\ind_{\{\va<\psi\}}H_m(\max(\va,\psi))=\ind_{\{\va<\psi\}}H_m(\psi).$$
\end{theorem}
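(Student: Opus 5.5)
The plan is to prove (ii) first, beginning with bounded functions, and then derive (i) from (ii) by a mass-counting argument that uses only that all functions in $\EcXo$ have Hessian measures of total mass $\int_X\o^n=1$.

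\textbf{Step 1: (ii) for bounded functions.} Let $\varphi,\psi$ be bounded $(\o,m)$-sh functions. The set $\{\varphi<\psi\}$ is open in the $(\o,m)$-fine topology, and $\max(\varphi,\psi)=\psi$ on it. I would establish locality of $H_m$ in this topology by following Bedford--Taylor.
\begin{itemize}
\item Take the decreasing smooth approximations from \cite[Theorem 1.2]{CD15}.
\item Use quasicontinuity of bounded $(\o,m)$-sh functions with respect to $\Capm$, so that outside an open set of small capacity all functions involved are continuous.
\item On the remaining relatively open set $\{\varphi<\psi\}$ the two functions $\max(\varphi,\psi)$ and $\psi$ coincide, so the corresponding Hessian measures agree there.
\item The measures $H_m$ of uniformly bounded functions are uniformly dominated by capacity, so the error from the exceptional set tends to zero.
\end{itemize}
I expect this step to be the main obstacle. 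It requires the quasicontinuity and convergence-in-capacity machinery for $(\o,m)$-sh functions, and this is where the results of \cite{DC15,CD15} must be imported.

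\textbf{Step 2: (ii) for $\psi\in\EcXo$.} Put $\varphi_j=\max(\varphi,-j)$ and $\psi_j=\max(\psi,-j)$. Then $\max(\varphi_j,\psi_j)=\max(\max(\varphi,\psi),-j)$, and
$$\{\varphi<\psi\}\cap\{\psi>-j\}\subset\{\varphi_j<\psi_j\}\cap\{\max(\varphi,\psi)>-j\}.$$
Step 1 gives
$$\ind_{\{\varphi<\psi,\ \psi>-j\}}H_m(\max(\varphi_j,\psi_j))=\ind_{\{\varphi<\psi,\ \psi>-j\}}H_m(\psi_j).$$
By \cite[Proposition 5.10]{CD15}, $\max(\varphi,\psi)\in\EcXo$. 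Hence both sides are restrictions, to the increasing sets $\{\varphi<\psi,\psi>-j\}$, of the increasing families whose limits define $H_m(\max(\varphi,\psi))$ and $H_m(\psi)$. Letting $j\to\infty$ gives equality on $\{\varphi<\psi\}\cap\{\psi>-\infty\}=\{\varphi<\psi\}$.

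\textbf{Step 3: (i).} Fix $\vep>0$ and set $\psi'=\psi-\vep\in\EcXo$ and $M=\max(\varphi,\psi')\in\EcXo$, so that $\int_X H_m(M)=1$.
\begin{itemize}
\item By (ii), $H_m(M)=H_m(\psi)$ on $\{\varphi<\psi'\}$, since $H_m(\psi')=H_m(\psi)$.
\item By (ii) with the roles of $\varphi$ and $\psi'$ swapped, $H_m(M)=H_m(\varphi)$ on $\{\varphi>\psi'\}$.
\end{itemize}
Therefore
$$\int_{\{\varphi<\psi-\vep\}}H_m(\psi)=1-\int_{\{\varphi\ge\psi-\vep\}}H_m(M)\le 1-\int_{\{\varphi>\psi-\vep\}}H_m(\varphi)=\int_{\{\varphi\le\psi-\vep\}}H_m(\varphi)\le\int_{\{\varphi<\psi\}}H_m(\varphi).$$
As $\vep\downarrow0$ the sets $\{\varphi<\psi-\vep\}$ increase to $\{\varphi<\psi\}$. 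Monotone convergence on the left then yields (i).
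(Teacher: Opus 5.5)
Your proof is correct. The paper gives no argument of its own here; it quotes \cite[Theorem 5.5]{CD15}, which transplants the \cite{GZ07} argument to the Hessian setting, and your proof is that same argument: locality of $H_m$ for bounded functions, extension of (ii) by truncation, then the mass-counting argument applied to $\psi-\varepsilon$.

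Two small points to tighten. First, the third bullet of Step 1 is not by itself an argument, since equality of two functions on a set that is only relatively open in $X\setminus U$ does not give equality of their Hessian measures there. Instead, compare $H_m(\max(\varphi,\psi_k))$ and $H_m(\psi_k)$ on the genuinely open sets $\{\varphi<\psi_k\}$, for smooth $(\omega,m)$-sh $\psi_k\searrow\psi$. Then pass to the limit, using quasicontinuity and the bound $\int_E H_m(w)\le C\,\Capm(E)$ for uniformly bounded $w$ to absorb the non-open part of $\{\varphi<\psi\}$. Second, in Step 3 the inclusion $\{\varphi\le\psi-\varepsilon\}\subset\{\varphi<\psi\}$ fails on $\{\varphi=\psi=-\infty\}$. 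That set is $H_m(\varphi)$-null because $\varphi\in\EcXo$, so the inequality still holds.
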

\n We also deal with the "partial comparison principle" as in \cite[Lemma 5.11]{CD15}.
\begin{theorem}\label{partial cp}
	Let T be a positive current of type
	$$T=(\o+dd^cu_1)\w\cdots\w(\o+dd^cu_k)\w\o^{n-m},\quad k<m,$$
	where $u_j\in\EcXo,j=1,\ldots,k.$ If $\varphi,\psi \in {\mathcal E}(X,\omega, m)$ then 
	$$\int_{\{\varphi<\psi\}}(\o+dd^c\va)^{m-k}\w T\leq \int_{\{\varphi<\psi\}}(\o+dd^c\psi)^{m-k}\w T.$$
	\end{theorem}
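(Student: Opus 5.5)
The strategy is the Guedj--Zeriahi mass-comparison argument for $\max(\varphi,\psi)$, run against the mixed current $T$ instead of $\omega^{n-m}$. Before giving it, I flag a problem with the statement as displayed: I believe it is false for $k=0$, and the argument naturally proves the reverse inequality (the one in Theorem \ref{comparison principle}(i)).

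\emph{Why the displayed direction fails.} Take $k=0$, so that $T=\omega^{n-m}$. Let $\varphi\in\EcXo$ be such that $H_m(\varphi)$ puts a definite proportion of its mass near a pole $p$ of $\varphi$. Put $\psi=\max(\varphi,-c)\in\EcXo$.
\begin{itemize}
\item On the set $\{\varphi<\psi\}=\{\varphi<-c\}$ we have $\psi\equiv -c$. So $H_m(\psi)$ only charges the boundary part of this open set; the interior carries no $\psi$-mass.
\item Meanwhile $\int_{\{\varphi<-c\}}H_m(\varphi)$ can stay close to the mass near $p$. I have not checked quantitatively that it strictly exceeds $\int_{\{\varphi<\psi\}}H_m(\psi)$, but this is the expected behaviour.
\end{itemize}
In that case the displayed inequality is violated. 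So I will prove the form $\int_{\{\varphi<\psi\}}(\omega+dd^c\psi)^{m-k}\wedge T\le\int_{\{\varphi<\psi\}}(\omega+dd^c\varphi)^{m-k}\wedge T$. I regard this as the intended statement, with $\varphi$ and $\psi$ interchanged in the display.

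\emph{Step 1 (bounded case).} Assume all functions are bounded. Set $w=\max(\varphi,\psi)$. The maximum principle (the analogue of Theorem \ref{comparison principle}(ii) for mixed Bedford--Taylor products, valid since $\{\varphi<\psi\}$ is plurifine open) gives
\[
\ind_{\{\varphi<\psi\}}\omega_w^{m-k}\wedge T=\ind_{\{\varphi<\psi\}}\omega_\psi^{m-k}\wedge T,
\qquad
\ind_{\{\varphi>\psi\}}\omega_w^{m-k}\wedge T=\ind_{\{\varphi>\psi\}}\omega_\varphi^{m-k}\wedge T .
\]
Next, all the measures involved have the same total mass, namely $\int_X\omega^n$. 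This holds by Stokes and cohomological invariance, because $\varphi,\psi,w,u_j$ are bounded.

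\emph{Step 2 (handling the contact set).} Replace $\psi$ by $\psi-\varepsilon$ so that the contact set is avoided. We have $\{\varphi<\psi-\varepsilon\}\subset\{\varphi<\psi\}$ and $\{\varphi\ge\psi-\varepsilon\}\supset\{\varphi>\psi-\varepsilon\}$. Comparing total masses of $\omega_w^{m-k}\wedge T$ and $\omega_\varphi^{m-k}\wedge T$, together with $\omega_w^{m-k}\wedge T\ge \ind_{\{\varphi>\psi-\varepsilon\}}\omega_\varphi^{m-k}\wedge T$, gives
\[
\int_{\{\varphi<\psi-\varepsilon\}}\omega_\psi^{m-k}\wedge T\le\int_{\{\varphi\le\psi-\varepsilon\}}\omega_\varphi^{m-k}\wedge T .
\]
Letting $\varepsilon\downarrow0$ and using monotone convergence of the sets yields the inequality on $\{\varphi<\psi\}$.

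\emph{Step 3 (unbounded case).} Apply the bounded case to the canonical cut-offs $\varphi^j=\max(\varphi,-j)$, $\psi^j=\max(\psi,-j)$ and $u_i^j=\max(u_i,-j)$. Then pass to the limit using the definition of the mixed Hessian measures on $\EcXo$ as limits of $\ind_{\{\cdot>-j\}}$-truncated measures. The needed fact is that the mass on $\{u_i\le -j\}$ tends to zero (Lemma 5.2 of \cite{CD15} and its mixed version).

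\emph{Main obstacle.} The delicate step is this last limit. The sets $\{\varphi^j<\psi^j\}$ differ from $\{\varphi<\psi\}$ only inside $\{\varphi\le -j\}$, whose mixed mass must be shown to vanish. I would handle this by the same plurifine-locality argument as in \cite[Lemma 5.11]{CD15}.
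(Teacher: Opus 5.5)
The paper gives no argument of its own here; it only quotes \cite[Lemma 5.11]{CD15}. You are right that the display has $\varphi$ and $\psi$ interchanged. In the proof of Proposition \ref{pro3.7} the paper itself uses the inequality in your direction, $\int_{\{\beta_j<\eta_j\}}\omega_{\eta_j}\wedge\cdots\le\int_{\{\beta_j<\eta_j\}}\omega_{\beta_j}\wedge\cdots$, which is consistent with Theorem \ref{comparison principle}(i).

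\emph{Your reason for the failure is wrong.} On a compact K\"ahler manifold a constant does not have zero Hessian measure. On the open set $\{\varphi<-c\}$ one has $\omega+dd^c\psi=\omega$, so by Theorem \ref{comparison principle}(ii), $\mathbf{1}_{\{\varphi<-c\}}H_m(\psi)=\mathbf{1}_{\{\varphi<-c\}}\omega^n$. You imported the flat picture $(dd^c\,\mathrm{const})^m\wedge\beta^{n-m}=0$. Since Theorem \ref{comparison principle}(i) always gives $\int_{\{\varphi<-c\}}H_m(\varphi)\ge\int_{\{\varphi<-c\}}\omega^n$, the display fails only if this inequality is strict, and that has to be arranged. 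Here is a correct counterexample, valid for every $0\le k<m$:
\begin{itemize}
\item Take $u_1=\cdots=u_k=0$, so $T=\omega^{n-m+k}$.
\item Let $\chi$ be smooth with $\chi=|z|^2$ on a coordinate ball $B(p,r)$ and $\chi\ge r^2$ elsewhere.
\item Put $\varphi=\varepsilon\chi$, with $\varepsilon$ so small that $\omega+\varepsilon dd^c\chi>0$.
\item Put $\psi=\max(\varphi,\varepsilon s^2)$ with $0<s<r$.
\end{itemize}
Then $\{\varphi<\psi\}=B(p,s)$. There $\omega_\psi=\omega$, while $\omega_\varphi=\omega+\varepsilon dd^c|z|^2>\omega$. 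Hence $\int_{B(p,s)}\omega_\varphi^{m-k}\wedge\omega^{n-m+k}>\int_{B(p,s)}\omega^n=\int_{B(p,s)}\omega_\psi^{m-k}\wedge\omega^{n-m+k}$.

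\emph{Step 3 has a genuine gap.} For the corrected inequality, your Steps 1--2 are the standard Guedj--Zeriahi/Dinew mass comparison and are fine. One small correction: for $m<n$ the set $\{\varphi<\psi\}$ is open in the $m$-fine topology, not in general plurifine open, so locality must be taken in that sense.

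Write $O_j=\{\varphi>-j\}\cap\bigcap_i\{u_i>-j\}$, and $T^j$ for the current built from the $u_i^j$. On the $\psi$-side locality is enough, because you only need a lower bound. On the $\varphi$-side you need $\int_{X\setminus O_j}\omega_{\varphi^j}^{m-k}\wedge T^j=\int_X\omega^n-\int_{O_j}\omega_\varphi^{m-k}\wedge T\to0$. This says exactly that the mixed measure $\omega_\varphi^{m-k}\wedge T$ has full mass $\int_X\omega^n$. Locality cannot deliver this: it identifies the truncated and limit measures on $O_j$, but says nothing about how much mass the truncated measures leave on $X\setminus O_j$. Lemma 5.2 of \cite{CD15} covers only the unmixed case. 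Appealing to ``the argument of \cite[Lemma 5.11]{CD15}'' is circular, since that lemma is the statement you are proving.

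The missing input follows from convexity of $\EcXo$ and Theorem \ref{comparison principle}(i):
\begin{itemize}
\item Normalize all functions to be $\le0$. Let $v=\frac1m\big((m-k)\varphi+\sum_iu_i\big)\in\EcXo$, and let $v_j$ be the same average of the truncations.
\item Expanding $\omega_{v_j}^m$ gives $\omega_{\varphi^j}^{m-k}\wedge T^j\le C_{m,k}H_m(v_j)$. Also $X\setminus O_j\subset\{v\le-j/m\}$.
\item Comparing $v$ with $v_j/2-t$, as in Lemma \ref{lm3.2}, gives $\int_{\{v<-2t\}}H_m(v_j)\le2^m\int_{\{v<-t\}}H_m(v)$. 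This tends to $0$ for $t=j/(4m)$.
\end{itemize}
Once full mass is available, Theorem \ref{partial cmp} (already stated for $\varphi,\psi\in\EcXo$) lets your Steps 1--2 run verbatim in $\EcXo$. The truncation in Step 3 then becomes unnecessary.
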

\n It follows from Corollary 2.8 in \cite{CD22} that we obtain "partial maximum principle".
\begin{theorem}\label{partial cmp}
		Let T be a positive current of type
	$$T=(\o+dd^cu_1)\w\cdots\w(\o+dd^cu_k)\w\o^{n-m},\quad k<m,$$
	where $u_j\in\EcXo,j=1,\ldots,k.$ If $\varphi,\psi \in {\mathcal E}(X,\omega,m)$ then
		$$\ind_{\{\va<\psi\}}\o^{m-k}_{(\max(\va,\psi))}\w T=\ind_{\{\va<\psi\}}\o^{m-k}_{\psi}\w T.$$ 
	\end{theorem}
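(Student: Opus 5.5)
The plan is to reduce the identity to the bounded case, where it follows from the locality of the complex Hessian operator in the $m$-fine (plurifine) topology. Then we pass to the limit along canonical truncations, using that mixed Hessian measures of functions in $\EcXo$ put no mass on $m$-polar sets.

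\emph{Step 1 (bounded case).} Assume first that $\va,\psi,u_1,\dots,u_k$ are bounded. The set $O=\{\va<\psi\}$ is open in the $m$-fine topology, since $(\o,m)$-sh functions are quasi-continuous with respect to $\Capm$. On $O$ we have $\max(\va,\psi)=\psi$. The Bedford--Taylor type locality principle for bounded $m$-sh functions says that if two bounded $(\o,m)$-sh functions coincide on an $m$-finely open set, then the wedge products $\o^{m-k}_{\bullet}\w T$ coincide there. Applying it gives $\ind_O\,\o^{m-k}_{\max(\va,\psi)}\w T=\ind_O\,\o^{m-k}_\psi\w T$. Locality can be proved as in the plurisubharmonic case: approximate by decreasing smooth $(\o,m)$-sh sequences (Theorem 1.2 in \cite{CD15}), use quasi-continuity to replace $O$ by open sets up to small capacity, and use the uniform bound on total masses of these measures over sets of small $\Capm$.

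\emph{Step 2 (truncation).} In general, set $\va^j=\max(\va,-j)$, $\psi^j=\max(\psi,-j)$, $u_i^j=\max(u_i,-j)$ and $T_j=\o_{u_1^j}\w\cdots\w\o_{u_k^j}\w\o^{n-m}$. The mixed measures on $\EcXo$ are defined as the increasing limits of $\ind_{E_j}\o^{m-k}_{\bullet^j}\w T_j$, where
$$E_j=\{\psi>-j\}\cap\bigcap_i\{u_i>-j\}$$
(and analogously with $\va$). Note that $\{\va^j<\psi^j\}\cap\{\psi>-j\}=\{\va<\psi\}\cap\{\psi>-j\}$ and that $\max(\va,\psi)^j=\max(\va^j,\psi^j)$. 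Step 1 applied to the bounded functions, multiplied by $\ind_{E_j}$, therefore gives
$$\ind_{\{\va<\psi\}\cap E_j}\,\o^{m-k}_{\max(\va^j,\psi^j)}\w T_j=\ind_{\{\va<\psi\}\cap E_j}\,\o^{m-k}_{\psi^j}\w T_j .$$
By the same fine locality on the finely open set $E_j$, where all truncations are inactive, both sides agree there with the restrictions of the unbounded measures $\o^{m-k}_{\max(\va,\psi)}\w T$ and $\o^{m-k}_{\psi}\w T$. To avoid boundary effects one uses $\{\psi>-j\}$ for a slightly smaller level, e.g. $-j+1$.

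\emph{Step 3 (limit).} Since $\{\va<\psi\}\subset\{\psi>-\infty\}$, the union of the sets $\{\va<\psi\}\cap E_j$ is $\{\va<\psi\}$ minus the set $\bigcup_i\{u_i=-\infty\}$. Letting $j\to\infty$ yields the identity on $\{\va<\psi\}\setminus\bigcup_i\{u_i=-\infty\}$. Finally, mixed Hessian measures of functions in $\EcXo$ (here $\max(\va,\psi)\in\EcXo$ by Proposition 5.10 of \cite{CD15}) are non-$m$-polar and have full mass. Hence neither side charges $\{u_i=-\infty\}$, and the identity holds on all of $\{\va<\psi\}$.

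The main obstacle is Step 2: the identification of the truncated measures on $E_j$ with the restrictions of the non-pluripolar mixed product. This requires the well-definedness of the mixed operator on $\EcXo$, i.e. its monotone limit construction and full mass, which is the content of \cite{CD22}. I would invoke that construction directly rather than reprove it.
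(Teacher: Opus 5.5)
Your proof is correct. It follows essentially the same route as the result the paper does not prove but simply quotes from Corollary 2.8 of \cite{CD22}: locality of bounded mixed Hessian products on the quasi-open set $\{\va<\psi\}$, transferred to the unbounded mixed products through their monotone truncation definition. Two steps are unnecessary but harmless: the shift to level $-j+1$ (since $E_j$ is already finely open and all truncations are inactive on it), and the appeal to full mass in Step 3 (since measures built as increasing limits of restrictions to $E_j$ never charge $\bigcup_i\{u_i=-\infty\}$).
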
	
\subsection{$(\omega,m)$-capacity}
\n  Next we recall the notion of capacity  and the  notion of convergence in  $(\omega,m)$-capacity (see Definition 2.9 and Definition 2.10 in \cite{DC15}).
\begin{definition}
	The $(\omega,m)$-capacity of a Borel subset $E$ of $X$ is defined by 
	$$
	\Capm(E) : =\sup\left\{ \int_E H_m(u) \setdef u\in \SHXo , \ -1\leq u\leq 0\right\}.
	$$
\end{definition}
\begin{definition}
We say that	a sequence $\{u_j\}\subset SH_m(X,\o)$ converges to  $u\in SH_m(X,\o)$ in $\Capm$  if for any $\vep>0$
	$$
	\lim_{j\to +\infty} \Capm(\{|u_j-u|>\ve\}) =0.
	$$
\end{definition}
\n Exactly as for the class of plurisubharmonic functions we have convergence in $(\omega,m)$-capacity for monotonely sequences (See Proposition 2.11 in \cite{DC15}).
\begin{proposition}
	If $(u_j)\subset \SHXo$ is a monotone sequence of functions in $SH_m(X,\omega)$ which is convergent to $u \not \equiv -\infty$ then 
	$u_j$ converges to $u$ in $\Capm$.
\end{proposition}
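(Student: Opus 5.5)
The plan is to reduce to uniformly bounded sequences by truncation. For bounded sequences we bound $\int_E H_m(v)$ by an energy-type integral that does not depend on the test function $v$. Both the decreasing and the increasing case are handled the same way, through the nonnegative functions $|u_j-u|$. Replacing $u_j,u$ by $u_j-C,\,u-C$ changes nothing, so we may assume all functions are $\le 0$, since $\sup_X u_j$ is bounded in both cases.

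\emph{Step 1 (truncation).} For $k>0$ put $u_j^k=\max(u_j,-k)$ and $u^k=\max(u,-k)$. Then
$$\{|u_j-u|>\vep\}\subset \{u<-k\}\cup\{u_j<-k\}\cup\{|u_j^k-u^k|>\vep\}.$$
We show $\Capm(\{w<-k\})\le C_w/k$ for every $w\in\SHXo$ with $w\le 0$. Fix $v\in\SHXo$ with $-1\le v\le 0$. Then
$$k\int_{\{w<-k\}}H_m(v)\le \int_X(-w^{k})H_m(v),$$
where $w^k=\max(w,-k)$. Write $H_m(v)=\o_v^m\w\o^{n-m}$ and expand $\o_v=\o+dd^cv$. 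Integrating by parts on the compact manifold $X$ moves each $dd^c v$ onto $w^k$. Using $\o+dd^c w^k\ge 0$ in the sense of $m$-positivity and $|v|\le 1$, we get
$$\int_X(-w^{k})H_m(v)\le \int_X(-w)\o^n+C(m),$$
uniformly in $k$ and $v$. This step is the standard integration by parts for bounded $(\o,m)$-sh functions, justified by the smooth decreasing approximation of \cite[Theorem 1.2]{CD15}. In the decreasing case $w\ge u$ for $w=u$ and $w=u_j$. In the increasing case $w\ge u_1$. Either way $\int(-w)\o^n$ is controlled uniformly by $u\not\equiv-\infty$ (respectively $u_1$), so the first two sets have capacity $\le C/k$ uniformly in $j$.

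\emph{Step 2 (bounded case).} Now assume $-k\le u_j,u\le 0$ and set $f_j=|u_j-u|$, which equals $u_j-u$ or $u-u_j$. Then
$$\vep\,\int_{\{f_j>\vep\}}H_m(v)\le \int_X f_j\,H_m(v).$$
We aim for the estimate
$$\int_X f_j\,\o_v^m\w\o^{n-m}\le C(k,m)\Bigl(\int_X f_j\,\o^n\Bigr)^{2^{-m}},$$
uniformly over $-1\le v\le 0$. This is the analogue of the Guedj–Zeriahi/Ko{\l}odziej estimate. We prove it by induction on the number of factors $\o_v$. In each step we write $\o_v=\o+dd^cv$ and integrate by parts to get $\int v\,dd^cf_j\w T$. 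We then use that $f_j$ is a difference of two bounded $(\o,m)$-sh functions and apply the Cauchy–Schwarz inequality
$$\Bigl|\int d f_j\w d^cv\w T\Bigr|\le\Bigl(\int df_j\w d^cf_j\w T\Bigr)^{1/2}\Bigl(\int dv\w d^cv\w T\Bigr)^{1/2},$$
where $T$ is a mixed product of $\o_{u_j},\o_u,\o_v,\o$ wedged with $\o^{n-m}$. The terms $\int dv\w d^c v\w T$ are bounded in terms of $k$. The term $\int df_j\w d^cf_j\w T=-\int f_j\,dd^cf_j\w T$ is bounded by $\int f_j(\o_{u_j}+\o_u)\w T$, which has one fewer free factor and lets the induction close.

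\emph{Step 3 (conclusion).} It remains to see that $\int_X f_j\,\o^n\to 0$ for the truncated functions. In the decreasing case this follows from monotone convergence. In the increasing case, $\lim u_j$ differs from its usc regularization $u$ only on a Lebesgue-null set, so monotone convergence again applies. Here we need the bound in Step 2 to be an $L^1(\o^n)$ bound rather than a pointwise one: it is insensitive to $\o^n$-null (indeed $m$-polar) sets, which carry no $H_m(v)$-mass for bounded $v$. Combining the steps,
$$\Capm(\{|u_j-u|>2\vep\})\le \frac{2C}{k}+\frac{C(k,m)}{\vep}\Bigl(\int f_j\,\o^n\Bigr)^{2^{-m}}.$$
Letting first $j\to\infty$ and then $k\to\infty$ finishes the proof.

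The main obstacle is Step 2. One must check that the Cauchy–Schwarz and integration-by-parts manipulations are legitimate for $m$-positive (rather than positive) currents $T$ of the form $\o_{\cdot}\w\cdots\w\o^{n-m}$ containing at most $m$ factors. This holds because $dd^cg\w T$ with $g$ $(\o,m)$-sh remains positive in that range, and because bounded functions allow approximation by smooth decreasing sequences.
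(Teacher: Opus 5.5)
\paragraph{Gap in Step 2.} The paper gives no proof of this statement; it quotes Proposition 2.11 of \cite{DC15}. Your Step 1 is fine: the truncation, and the bound $\Capm(\{w<-k\})\le (\int_X(-w)\,\omega^n+m)/k$ obtained by integrating by parts. The gap is the key estimate in Step 2, and your Step 3 depends on it in both cases.

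Your Cauchy--Schwarz step only trades a factor $\omega_v$ for a factor $\omega_u$ or $\omega_{u_j}$. After all factors $\omega_v$ are removed, you are left with terms $\int_X f_j\,\omega_{u_j}^a\wedge\omega_u^b\wedge\omega^{n-a-b}$ with $a+b\geq 1$. Nothing in your argument bounds these by $\int_X f_j\,\omega^n$. If you peel off $\omega_u$ or $\omega_{u_j}$ the same way, the bound $\int df_j\wedge d^cf_j\wedge T\le\int f_j(\omega_{u_j}+\omega_u)\wedge T$ puts such a factor straight back. The recursion is then of the form $A\le A'+C\sqrt{A}$, which gives no smallness.

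In fact the estimate $\int_X f\,H_m(v)\le C(k,m)\|f\|_{L^1(\omega^n)}^{2^{-m}}$ is false. Take $m=1$ and $(\omega,1)$-sh functions $-k\le\varphi\le\psi\le 0$ with $k\geq 1$, and put $f=\psi-\varphi$. Integration by parts gives
\[
\int_X f\,H_1(\varphi)=\int_X f\,H_1(\psi)+\int_X df\wedge d^cf\wedge\omega^{n-1}\geq\int_X df\wedge d^cf\wedge\omega^{n-1}.
\]
Moreover $v=\varphi/k$ is admissible and $H_1(v)\geq k^{-1}H_1(\varphi)$. Now let $f$ be a sum of $K\approx c\,r^{2-2n}$ disjoint smooth bumps of height $1$ and radius $r$, with $c>0$ small. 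Solve Poisson equations for the positive and negative parts of $\Delta_\omega f$ (minus their common average) to get $\psi$ and $\varphi$. These are uniformly bounded and $(\omega,1)$-sh, and $\psi-\varphi=f$. The Dirichlet energy of $f$ stays of order $c$, while $\|f\|_{L^1(\omega^n)}\approx c\,r^2\to 0$. So no $L^1(\omega^n)$ bound of this kind is available, and Step 3 cannot work as written.

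\paragraph{What is needed instead.} The missing ingredient is a convergence statement for the base terms, not an $L^1$ estimate.

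For decreasing sequences you can close your own induction by keeping only $\omega_u$. Since $f_j=u_j-u\geq 0$, you have $-\int f_j\,dd^cf_j\wedge T\le\int f_j\,\omega_u\wedge T$. Set
\[
I_j(a,b)=\sup_v\int f_j\,\omega_v^a\wedge\omega_u^b\wedge\omega^{n-a-b}.
\]
Then $I_j(a,b)\le I_j(a-1,b)+I_j(a-1,b+1)^{1/2}$, using $\int dv\wedge d^cv\wedge T\le 1$. The base terms $\int f_j\,\omega_u^b\wedge\omega^{n-b}$ integrate $f_j\searrow 0$ against fixed finite measures, so they tend to $0$ by monotone convergence.

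For increasing sequences the same step produces $\int_X(u-u_j)\,\omega_{u_j}^b\wedge\omega^{n-b}$, with measures depending on $j$. Showing these tend to $0$ needs a further ingredient. One option is the Bedford--Taylor convergence $u_j\,\omega_{u_j}^b\wedge\omega^{n-b}\to u\,\omega_u^b\wedge\omega^{n-b}$ for increasing sequences, combined with upper semicontinuity of $u$. Another is quasi-continuity, together with the fact that $\{\lim_j u_j<u\}$ has zero $(\omega,m)$-capacity, and Dini's theorem.
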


\n According to Lemma 2.12 in \cite{DC15} we have the following proposition on the quasi-continuity of $(\omega,m)$-sh functions.
\begin{proposition}\label{quasicontinuity}
	Any $(\omega,m)$-sh function $u$ is quasi-continuous, i.e. for any $\vep>0$ there exists an open subset $U$ such that $\Capm(U)<\vep$ and 
	$u$ restricted on $X\setminus U$ is continuous.

\end{proposition}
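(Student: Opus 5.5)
The statement immediately above is Proposition \ref{quasicontinuity}: every $(\omega,m)$-sh function is quasi-continuous with respect to $\Capm$. I would prove it by smooth approximation together with a capacity estimate for sublevel sets. This is the classical Bedford--Taylor/Guedj--Zeriahi argument.

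\textbf{Step 1 (capacity of sublevel sets).} Fix $u\in\SHXo$. Since $X$ is compact and $u$ is upper semicontinuous, after adding a constant we may assume $\sup_X u=0$. I would show that
$$\Capm(\{u<-t\})\le \frac{C}{t}\quad\text{for } t\ge 1,$$
with $C$ depending on $u$ only through $\int_X|u|\,\omega^n$. Take $v\in\SHXo$ with $-1\le v\le 0$. On $\{u<-t\}$ we have $|u|/t\ge 1$, so
$$\int_{\{u<-t\}}H_m(v)\le \frac1t\int_X(-u)\,H_m(v).$$
Write $H_m(v)=\omega_v\wedge T$ with $T=\omega_v^{m-1}\wedge\omega^{n-m}$. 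Integration by parts on the closed manifold gives
$$\int_X(-u)\,\omega_v\wedge T=\int_X(-u)\,\omega\wedge T+\int_X v\,(-dd^c u)\wedge T.$$
Using $-dd^cu\le\omega$ in the $m$-positive cone, together with $|v|\le 1$, the second term is at most $\int_X\omega\wedge T$. Repeating this peels off every factor $\omega_v$, and we end with $\int_X(-u)\,\omega^n$ plus a constant. That gives the bound. To make the integration by parts legitimate, I would first do it for $u$ replaced by the bounded function $\max(u,-k)$ and then let $k\to\infty$.

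\textbf{Step 2 (approximation).} By \cite[Theorem 1.2]{CD15}, there are smooth $u_j\in\SHXo$ decreasing to $u$. By the monotone convergence Proposition above, $u_j\to u$ in $\Capm$. So for each $k$ we can choose $j_k$ with
$$\Capm\bigl(\{u_{j_k}-u>1/k\}\bigr)<\vep\,2^{-k-1}.$$
Capacities of Borel sets are approximated from outside by open sets, using regularity of the Hessian measures and the fact that $\Capm$ is a supremum of measures. Hence there are open sets $O_k\supset\{u_{j_k}-u>1/k\}$ with $\Capm(O_k)<\vep 2^{-k}$. Also fix $t$ large and an open set $O_0$ with $O_0\supset\{u<-t\}$ and $\Capm(O_0)<\vep/2$; this is possible by Step 1. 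Then $\{u<-t\}$ is excluded, so $u$ stays finite off $O_0$.

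\textbf{Step 3 (conclusion).} Let $U=O_0\cup\bigcup_k O_k$. Countable subadditivity of $\Capm$ holds because it is a supremum of measures, so $\Capm(U)<\vep$. On $X\setminus U$ we have $0\le u_{j_k}-u\le 1/k$. Thus the continuous functions $u_{j_k}$ converge uniformly to $u$ there, and $u|_{X\setminus U}$ is continuous.

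\textbf{Main obstacle.} I expect Step 1 to be the main obstacle. The integration by parts must be justified for merely $m$-positive currents, and the inequality $(-dd^cu)\wedge T\le\omega\wedge T$ must be checked rather than assumed. I would approximate $u$ by the smooth $u_j$ as well, so that all wedge products are classical, and pass to the limit using the decreasing convergence. If that becomes delicate, Step 1 can be avoided entirely. Instead choose the $O_k$ with $\Capm(O_k)<\vep 2^{-k}$ and use that on $X\setminus U$ the function $u$ is a uniform limit of smooth functions, valued in $[-\infty,0]$. The limit is then continuous as an extended-valued function. This suffices unless finiteness is demanded, and finiteness follows anyway from the fact that the polar set $\{u=-\infty\}$ has zero capacity.
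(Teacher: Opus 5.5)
The paper gives no argument here; it only quotes \cite[Lemma 2.12]{DC15}. Your Steps 2--3 are the standard Bedford--Taylor argument behind that lemma: smooth decreasing approximation from \cite[Theorem 1.2]{CD15}, convergence in $\Capm$ of monotone sequences, countable subadditivity of $\Capm$ as a supremum of measures, and uniform convergence off a small set. The proposal is therefore correct in substance, but two points need repair.

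First, your reason for replacing $\{u_{j_k}-u>1/k\}$ by open sets $O_k$ is not valid as stated. A supremum of outer regular measures need not be outer regular: on the real line, $\sup_n\delta_{1/n}$ vanishes on $\{0\}$ but equals $1$ on every neighbourhood of $0$. Moreover, outer regularity of such capacities on Borel sets is usually derived from quasi-continuity, so appealing to it here would be circular. The step is, however, unnecessary. Since $u_{j_k}$ is continuous and $u$ is upper semicontinuous, $u_{j_k}-u$ is lower semicontinuous with values in $(-\infty,+\infty]$. Hence $\{u_{j_k}-u>1/k\}$ is already open, and so is $\{u<-t\}$. Take $U=\bigcup_{k\ge1}\{u_{j_k}-u>1/k\}$ directly; then $\Capm(U)\le\sum_{k\ge1}\Capm(\{u_{j_k}-u>1/k\})<\varepsilon/2$. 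Your own bookkeeping, with $\Capm(O_k)<\varepsilon 2^{-k}$ plus $\varepsilon/2$ for $O_0$, actually adds up to $3\varepsilon/2$, not $\varepsilon$.

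Second, Step 1 and the worry about finiteness can be dropped. Every point where $u=-\infty$ lies in $\{u_{j_1}-u>1\}\subset U$. On $X\setminus U$ one has $u\ge u_{j_1}-1>-\infty$, so $u$ is finite there and is the uniform limit of the continuous functions $u_{j_k}$. Also, the sublevel-set estimate you re-derive in Step 1 is already recorded in the paper as Proposition \ref{md2.12}, so it would not need a separate proof even if it were used.
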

	\n The following definition deals with the uniformly absolutely continuity with respect to $(\omega,m)$-capacity of a family of Borel measures.
\begin{definition} A family of positive measures $\{\mu_\alpha\}$ on $X$ is said to be uniformly absolutely continuous with respect to $(\omega,m)$-capacity if for every $\epsilon >0$ there exists $\delta >0$ such that for each Borel subset $E\subset X$ satisfying $\Capm(E)<\delta$ the inequality $\mu_\alpha (E)<\epsilon$ holds for all $\alpha$. We denote this by $\mu_\alpha\ll \Capm$.
\end{definition}
\n We recall Definition 4.8 in \cite{CD15} about $(\o,m)$-polar sets.
\begin{definition}
	A set $E\subset X$ is called $(\o,m)$-polar if for any $z\in E$ there exist neighborhood $V$ of $z$ and $v\in \SH_m(X,\omega)$ such that $E\cap V\subset\{v = -\infty\}$. 
\end{definition}
\n By Theorem 4.6, Lemma 4.9 and Theorem 4.10 in \cite{CD15} we see that  a Borel set E is $(\o,m)$-polar if and only if $Cap_{\o,m}(E)=0.$ Moreover, by the definition of the class ${\mathcal E}(X,\omega, m)$ we see that for all function $u\in {\mathcal E}(X,\omega,m)$ we have $H_m(u)$ puts  no mass on all $(\o,m)$-polar sets. \\
\n Note that, by Corollary 3.3 in \cite{CD15} we have $\mathcal{P}_m(X,\o)=\SHXo$. Thus, using Corollary 3.18 in \cite{Ch13a}, we get the following proposition.

\begin{proposition}\label{md2.12} Let $u\in \SHXo$. Then for $t> 0$
	$$Cap_{\o,m}(\{u<-t\})\leq \frac {(2m+1)|\sup\limits_{X} u|+m} t.$$
\end{proposition}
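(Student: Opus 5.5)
The plan is to bound $\int_{\{u<-t\}}H_m(v)$ uniformly over all competitors $v\in\SHXo$ with $-1\le v\le 0$, and then take the supremum over $v$. I will first reduce to $u\le 0$ by writing $u=\tilde u+\sup_X u$ with $\sup_X\tilde u=0$. On $\{u<-t\}$ one has $\tilde u<-t+|\sup_X u|$, which accounts for a term of order $|\sup_X u|/t$. If $t\le 2|\sup_X u|$ (or, more generally, if the right-hand side is at least $\Capm(X)=\int_X\o^n=1$), there is nothing to prove. So we may assume $u\le 0$, up to collecting the $|\sup_X u|$ contributions at the end.

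By quasi-continuity (Proposition~\ref{quasicontinuity}) and monotone approximation, I will also assume $u$ is bounded. I would replace $u$ by $u_k=\max(u,-k)$ with $k>t$, noting that $\{u<-t\}=\{u_k<-t\}$. Then Chebyshev gives
$$\int_{\{u<-t\}}H_m(v)\le \frac1t\int_X(-u_k)\,(\o+dd^cv)^m\w\o^{n-m}.$$
The core step is an inductive integration by parts in the Bedford--Taylor setting for bounded $(\o,m)$-sh functions. Writing $\o_v^m\w\o^{n-m}=\o_v^{m-1}\w\o^{n-m+1}+dd^cv\w\o_v^{m-1}\w\o^{n-m}$ and using Stokes on the compact manifold $X$, I get
$$\int(-u_k)\,dd^cv\w\o_v^{m-1}\w\o^{n-m}=\int(-v)\,(\o_{u_k}-\o)\w\o_v^{m-1}\w\o^{n-m}\le\int_X\o_{u_k}\w\o_v^{m-1}\w\o^{n-m}=1.$$
Here I use $0\le -v\le 1$ and drop the term containing $\o$, since $(-v)\,\o\w\o_v^{m-1}\w\o^{n-m}\ge 0$. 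Iterating $m$ times yields
$$\int_X(-u_k)H_m(v)\le \int_X(-u_k)\,\o^n+m.$$
Letting $k\to\infty$ by monotone convergence gives $\Capm(\{u<-t\})\le\bigl(\int_X(-u)\o^n+m\bigr)/t$.

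The main obstacle is the remaining $L^1$ term $\int_X(-u)\,\o^n$, which must be controlled by a multiple of $|\sup_X u|$ (plus whatever has already been absorbed into $m$). My first attempt would be to avoid $\int(-u)\o^n$ altogether by running the same integration by parts against the extremal-type competitor. Concretely, I would iterate with the mixed currents $\o_u^j\w\o_v^{m-j}\w\o^{n-m}$ rather than with $\o$, so that every step produces only total masses (each equal to $1$) together with boundary terms of the form $\sup_X u$ times a mass. These boundary terms should give the $(2m+1)|\sup_X u|$: one $|\sup_X u|$ from the initial shift, and two per step from the $\pm\sup_X u$ produced when integrating by parts against $u-\sup_X u$ and against $u$.

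This is exactly the bookkeeping carried out in Corollary~3.18 of \cite{Ch13a} for the class $\mathcal P_m(X,\o)$. Since $\mathcal P_m(X,\o)=\SHXo$ by Corollary~3.3 of \cite{CD15}, I would ultimately transport that estimate verbatim. If the direct computation does not close, I would fall back on the weaker but standard bound with $\int_X(-u)\o^n$. I would then check whether that bound already suffices for the applications in the proof of Theorem~\ref{2.4}, where $\sup_X u_j=0$ and the $L^1$ norms are uniformly bounded by compactness of $\{u\in\SHXo:\sup_X u=0\}$ in $L^1$.
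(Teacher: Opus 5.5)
Your integration by parts is correct. After truncating by $\max(u,-k)$ with $k>t$, it yields, for $u\le 0$,
\[
\Capm(\{u<-t\})\le \frac1t\Bigl(\int_X(-u)\,\omega^n+m\Bigr).
\]
The gap is the next step. You claim that iterating against the mixed currents $\omega_u^j\wedge\omega_v^{m-j}\wedge\omega^{n-m}$ turns $\int_X(-u)\,\omega^n$ into boundary terms of size $(2m+1)|\sup_X u|$, but this is only asserted (``should give''). No such bookkeeping can exist, because for $\sup_X u=0$ the proposition claims $\Capm(\{u<-t\})\le m/t$, and that is false.

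Here is a counterexample. Take $m=1$ and let $g$ be the Green function of $\omega$ with pole $p$, i.e.\ $(\omega+dd^cg)\wedge\omega^{n-1}=\delta_p$ (on $\mathbb P^1$ this is simply $\omega+dd^cg=\delta_p$), normalized by $\sup_Xg=0$.
\begin{itemize}
\item For $s>\max(t,1)$, $v_s:=\max(g/s,-1)$ is an admissible competitor, because $(\omega+dd^c(g/s))\wedge\omega^{n-1}=(1-\frac1s)\,\omega^n+\frac1s\,\delta_p\ge 0$ and $-1\le v_s\le 0$.
\item On the open set $\{g>-s\}$ one has $H_1(v_s)=(1-\frac1s)\,\omega^n$, and $H_1(v_s)(X)=1$.
\item Since $\{g\le -s\}\subset\{g<-t\}$,
\[
\Capa_{\omega,1}(\{g<-t\})\ \ge\ \int_{\{g\le -s\}}H_1(v_s)\ =\ \frac1s+\Bigl(1-\frac1s\Bigr)\int_{\{g\le -s\}}\omega^n .
\]
\item Letting $s\downarrow t$ gives $\Capa_{\omega,1}(\{g<-t\})\ge \frac1t+\bigl(1-\frac1t\bigr)\int_{\{g<-t\}}\omega^n>\frac1t$ for every $t>1$.
\end{itemize}
The right-hand side of Proposition~\ref{md2.12} is exactly $\frac1t$ in this case. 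So some quantity beyond $\sup_X u$ is unavoidable, for instance $\int_X(-u)\,\omega^n$ or a constant depending on $(X,\omega)$.

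Your fallback, importing Corollary~3.18 of Lu's paper via $\mathcal P_m(X,\omega)=\SHXo$, is word for word the paper's proof, which consists only of that citation. It therefore has the same defect: the constant $(2m+1)|\sup_X u|+m$ is not correct, so the citation cannot justify the statement as written.

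The correct statement is the one your computation proves. For $u\le 0$ one has $\int_X(-u)\,\omega^n\le C_0+|\sup_X u|$, where $C_0:=\sup\{\int_X(-w)\,\omega^n:\ w\in\SHXo,\ \sup_X w=0\}<\infty$ by $L^1$-compactness. This gives $\Capm(\{u<-t\})\le (C_0+|\sup_X u|+m)/t$. That uniform $C/t$ bound is all the paper actually uses later, in Lemma~\ref{lm3.4}, Proposition~\ref{pro3.7} and Theorem~\ref{2.4}. You should stop at your intermediate inequality and restate the proposition with this constant, rather than try to reach the stated one.
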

\n By Lemma 2.13 in \cite{DC15} we have a following result which is very useful in the next section.
\begin{proposition}\label{capacity} 
	Let $(\varphi_j)$ be an uniformly bounded sequence of functions in $\SH_m(X,\o)\cap L^{\infty}(X)$ converging in $\Capm$ to $\varphi.$ Then we have the weak*- convergence of measures $H_m(\varphi_j)\to H_m(\varphi).$
	\end{proposition}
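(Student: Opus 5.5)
The statement to prove is the one immediately above, Proposition \ref{capacity}: if $(\varphi_j)\subset \SH_m(X,\o)\cap L^\infty(X)$ is uniformly bounded and $\varphi_j\to\varphi$ in $\Capm$, then $H_m(\varphi_j)\to H_m(\varphi)$ weakly. The plan is a Xing-type argument: reduce to continuous limits by quasi-continuity, and expand the Hessian measure in a telescoping sum.

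\textbf{Normalization and goal.} Subtracting a constant (which does not change $H_m$), we may assume $-C\le \varphi_j,\varphi\le 0$. It suffices to show $\int_X \chi\, H_m(\varphi_j)\to \int_X \chi\, H_m(\varphi)$ for every smooth test function $\chi$ on $X$.

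\textbf{Telescoping.} First we write
$$H_m(\varphi_j)-H_m(\varphi)=\sum_{k=0}^{m-1} dd^c(\varphi_j-\varphi)\w T_{j,k},\qquad T_{j,k}=\o_{\varphi_j}^{k}\w\o_{\varphi}^{m-1-k}\w\o^{n-m}.$$
Integrating by parts on the compact manifold gives
$$\int_X\chi\, dd^c(\varphi_j-\varphi)\w T_{j,k}=\int_X(\varphi_j-\varphi)\,dd^c\chi\w T_{j,k}.$$
Since $dd^c\chi\ge -A\o$, the term $dd^c\chi\w T_{j,k}$ can be written as a difference of positive currents of the same type with $\o$ inserted. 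Every such positive current has total mass bounded independently of $j$, by cohomology, and equal to $\int_X\o^n$ up to constants.

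\textbf{Splitting the integral.} Next we fix $\ve>0$ and split $\int_X|\varphi_j-\varphi|\,S_j$ over $\{|\varphi_j-\varphi|\le\ve\}$ and its complement, where $S_j$ is one of these positive currents. The first piece is at most $\ve\cdot\mathrm{mass}(S_j)\le C'\ve$. The second piece is at most $2C\int_{\{|\varphi_j-\varphi|>\ve\}}S_j$. The current $S_j$ is a mixed Hessian measure of the bounded functions $\varphi_j/C$, $\varphi/C$ and $0$ (after rescaling $\o$ by $C$). By polarization and the standard comparison, $\int_E S_j\le C''\Capm(E)$, since a mixed measure of $(\o,m)$-sh functions valued in $[-1,0]$ is dominated by the Hessian measure of their average, up to combinatorial constants. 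Hence the second piece tends to $0$ by convergence in capacity.

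\textbf{Main obstacle.} The delicate point is justifying the mixed-measure capacity bound $\int_E \o_{u_1}\w\cdots\w\o_{u_m}\w\o^{n-m}\le c\,\Capm(E)$ for $-1\le u_i\le 0$. I would take $v=\frac1m\sum u_i\in\SHXo$, with $-1\le v\le0$. Expanding $\o_v^m$ gives a sum of positive mixed terms with positive coefficients, and the mixed term we want appears with coefficient $\ge m!/m^m$. Positivity of every mixed term follows from the Gårding-type positivity encoded in Definition \ref{def: m-sh non-smooth}, extended to bounded functions by decreasing smooth approximation.

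A secondary subtlety is the integration by parts and the weak definition for bounded, non-continuous $\varphi_j$. Here I would use that the Bedford–Taylor wedge products are continuous along decreasing sequences, and approximate by smooth functions via \cite[Theorem 1.2]{CD15}. Quasi-continuity, Proposition \ref{quasicontinuity}, can also be used to replace $\varphi$ by a continuous function off an open set of small capacity, whose contribution is again controlled by the same capacity bound.
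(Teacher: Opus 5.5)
Your proof is correct. It differs from the paper simply because the paper proves nothing here: it quotes the statement from \cite[Lemma 2.13]{DC15}. Your argument is a self-contained replacement, and a short one.

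The telescoping identity $H_m(\varphi_j)-H_m(\varphi)=\sum_{k=0}^{m-1}dd^c(\varphi_j-\varphi)\wedge T_{j,k}$ does most of the work. Add integration by parts on the compact $X$ (no boundary terms), and the splitting $dd^c\chi\wedge T_{j,k}=A\,\omega_{\chi/A}\wedge T_{j,k}-A\,\omega\wedge T_{j,k}$ into two positive mixed Hessian measures of total mass $A\int_X\omega^n$. Together these reduce everything to one estimate:
$$\int_E\omega_{u_1}\wedge\cdots\wedge\omega_{u_m}\wedge\omega^{n-m}\le \frac{m^m}{m!}\,C^m\,\Capm(E),\qquad -C\le u_i\le 0,\ C\ge 1 .$$
Your polarization argument gives this, with $u_1=0$ or $u_1=\chi/A$ supplying the extra $\omega$-factor.

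One phrase should be tightened. "Rescaling $\omega$ by $C$" is imprecise, since $S_j$ is not literally a mixed measure of $\varphi_j/C,\varphi/C$. What you want is $H_m(v)\le C^m H_m(v/C)$ for the average $v=\frac1m\sum u_i$. This follows factor by factor from $\omega_v=C\,\omega_{v/C}-(C-1)\omega$ and positivity of the mixed terms, exactly as the paper does repeatedly.

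Your route even gives a quantitative bound:
$$\Bigl|\int_X\chi\,\bigl(H_m(\varphi_j)-H_m(\varphi)\bigr)\Bigr|\le C_\chi\bigl(\varepsilon+\Capm(\{|\varphi_j-\varphi|>\varepsilon\})\bigr).$$

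Quasi-continuity, despite your opening sentence and closing remark, is never actually used. In the classical inductive scheme, one proves $\varphi_jT_j\to\varphi T$ one factor at a time and needs $\varphi$ quasi-continuous to pass to the limit in $\varphi\,(T_j-T)$. Your telescoping keeps the factor $\varphi_j-\varphi$ in front of every term, so convergence in capacity plus uniform capacity domination of the mixed measures suffices. On a compact manifold the extra $\omega$ coming from $dd^c\chi$ costs nothing, because $\omega=\omega_0$ is itself an admissible Hessian factor.

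The one thing you leave tacit is that $\varphi$ is bounded, which you need for the normalization $-C\le\varphi\le 0$ and for $H_m(\varphi)$ to be the Bedford--Taylor measure. This is implicit in the statement.
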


\section{Convergence in $(\omega,m)$-capacity of  the class $\mathcal{E}(X, \omega,m)$}

Now, we give some results on the convergence in $(\omega,m)$-capacity in the class $\mathcal{E}(X,\omega,m)$.

\begin{theorem}\label{th3.1}
	If $u_j,\,u\in {\cal E}(X,\omega, m)$ are such that $u_j\to u$ in $\Capm$ on $X$, then
	$H_m(u_j)\to H_m(u)$ weakly in $X.$
\end{theorem}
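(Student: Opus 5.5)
We follow Xing's strategy from \cite[Theorem 1]{Xi09}: approximate by bounded truncations, use Proposition \ref{capacity} for the truncated functions, and control the truncation error uniformly in $j$. Since all measures involved have total mass $\int_X\o^n=1$, it suffices to show that for every continuous $\chi$ on $X$ we have $\int_X\chi H_m(u_j)\to\int_X\chi H_m(u)$.

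Fix $k\in\mathbb N$ and set $u_j^k=\max(u_j,-k)$ and $u^k=\max(u,-k)$. Since $|\max(a,-k)-\max(b,-k)|\le|a-b|$, we have $\{|u_j^k-u^k|>\ve\}\subset\{|u_j-u|>\ve\}$. Hence $u_j^k\to u^k$ in $\Capm$ as $j\to\infty$. The sequence $(u_j^k)_j$ is uniformly bounded: from below by $-k$, and from above because convergence in capacity forces $\sup_X u_j$ to stay bounded (one can subtract constants if needed, since $H_m$ is unchanged). Proposition \ref{capacity} then gives $H_m(u_j^k)\to H_m(u^k)$ weakly as $j\to\infty$, for each fixed $k$.

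Next we compare $H_m(u_j)$ with $H_m(u_j^k)$. By Theorem \ref{comparison principle}(ii) with $\psi=u_j$ and $\va=-k$ (or directly from the definition of $H_m$ on $\EcXo$), the two measures agree on $\{u_j>-k\}$. Therefore
$$\Bigl|\int\chi H_m(u_j)-\int\chi H_m(u_j^k)\Bigr|\le\|\chi\|_\infty\Bigl(H_m(u_j)(\{u_j\le -k\})+H_m(u_j^k)(\{u_j\le -k\})\Bigr).$$
Both masses equal $1-H_m(u_j)(\{u_j>-k\})$, so the error is $2\|\chi\|_\infty H_m(u_j)(\{u_j\le-k\})$. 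The same bound holds for $u$, and there the error tends to $0$ as $k\to\infty$ because $u\in\EcXo$.

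\textbf{Main obstacle.} We must show that
$$\lim_{k\to\infty}\limsup_{j\to\infty}H_m(u_j)(\{u_j\le -k\})=0,$$
and this does not follow from membership of each $u_j$ in $\EcXo$ separately. Following Xing, we will instead prove the lower bound $\liminf_j H_m(u_j)(\{u_j>-k\})\ge H_m(u)(\{u>-k+1\})-o(1)$. Choose a continuous cutoff $0\le\theta\le1$ with $\theta=1$ on $[-k+1,\infty)$ and $\theta=0$ on $(-\infty,-k]$. Using quasi-continuity (Proposition \ref{quasicontinuity}) together with the uniform bound $H_m(v)(E)\le\Capm(E)$ for $-1\le v/k\le0$ (after rescaling), we get
$$\int\theta(u_j^k)H_m(u_j^k)-\int\theta(u^k)H_m(u^k)\to0.$$
Here we use that $\theta(u_j^k)-\theta(u^k)$ is small outside a set of small capacity, while quasi-continuity lets us replace $\theta(u^k)$ by a continuous function off an open set of small capacity. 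Since $\theta(u_j^k)H_m(u_j^k)\le\ind_{\{u_j>-k\}}H_m(u_j)$, we obtain
$$\liminf_j H_m(u_j)(\{u_j>-k\})\ge H_m(u)(\{u>-k+1\}).$$
The right-hand side tends to $1$ as $k\to\infty$. Consequently $\limsup_j H_m(u_j)(\{u_j\le-k\})\to0$.

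Combining the three estimates, with $k$ chosen first and then $j\to\infty$, gives $\limsup_j|\int\chi H_m(u_j)-\int\chi H_m(u)|\le\epsilon$ for every $\epsilon>0$, which proves the weak convergence.
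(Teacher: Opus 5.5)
Your proof is correct and follows essentially the same route as the paper: the three-term truncation decomposition, Proposition \ref{capacity} for the bounded truncations, the error bound $2\|\chi\|_\infty H_m(u_j)(\{u_j\le -k\})$, and the key lower bound $\liminf_j H_m(u_j)(\{u_j>-k\})\ge H_m(u)(\{u>-k+1\})-o(1)$. That lower bound comes, in both arguments, from quasi-continuity together with domination of $H_m(\max(u_j,-k))$ by a constant multiple of $\Capm$. The differences are minor: you use a continuous cutoff $\theta(u_j^k)$ where the paper uses the indicator of the open set $\{\bar u>-k+1\}$ and lower semicontinuity of weak limits on open sets, and you explicitly justify the uniform bound on $\sup_X u_j$ needed to apply Proposition \ref{capacity}, which the paper leaves implicit.
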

\begin{proof}
	For any constant $k$ we write
	 \begin{equation}\label{eq3.1}\begin{aligned}
			H_m(u_j)-H_m(u)=&\big(H_m(u_j)-H_m({\max(u_j,-k)})\big)\\
		&+\bigl[H_m({\max(u_j,-k)})-H_m({\max(u,-k)})\bigr]\\
		&+\bigl[H_m({\max(u,-k)})-H_m(u)\bigr].
	\end{aligned}
	\end{equation}
	
	\n  It follows from Proposition \ref{capacity} that 
	\begin{equation}\label{eq3.1b}H_m({\max(u_j,-k_0)})- H_m({\max(u,-k_0)})\longrightarrow 0
		\end{equation} weakly on $X$ as $j\to\infty$ with $k_0$ is fixed.\\
	\n  Given a test function $\psi$, by Theorem \ref{comparison principle} we get that 
	\begin{equation}\label{e3.2}
	\begin{aligned}	
	&	\Bigl|\int_X\psi\,\bigl[H_m(u_j)-H_m({\max(u_j,-k)})\bigr]\Bigr|=\Bigl|\int_{\{u_j\leq -k\}}\psi\,\bigl[H_m(u_j)-H_m({\max(u_j,-k)})\bigr]\Bigr|\\
	&\leq \sup_X|\psi |\,\Bigl[\int_{\{u_j\leq -k\}}H_m(u_j)+\int_{\{u_j\leq -k\}}H_m({\max(u_j,-k)})\Bigr]\\
	&=\sup_X|\psi |\,\Bigl[\int_{\{u_j\leq -k\}}H_m(u_j)+\int_XH_m({\max(u_j,-k)})-\int_{\{u_j> -k\}}H_m({\max(u_j,-k)})\Bigr]\\
	&=\sup_X|\psi |\,\Bigl(\int_{\{u_j\leq -k\}}H_m(u_j)+\int_XH_m(u_j)
	-\int_{\{u_j> -k\}}H_m(u_j)\Bigr)\\
	&=2\,\sup_X|\psi |\,\int_{\{u_j\leq -k\}}H_m(u_j).
	\end{aligned}
\end{equation}
	\n Similarly, we also have 
$$\Bigl|\int_X\psi\,\bigl(H_m(\max(u,-k))-H_m(u)\bigr)\Bigr|\leq 2\,\sup_X|\psi |\,\int_{\{u\leq -k\}}H_m(u).$$
Since $u\in\EcXo,$ we have $\lim\limits_{k\to\infty}\int_{\{u\leq -k\}}H_m(u)=0. $
Thus, we have
\begin{equation}\label{eq3.3}
	\lim\limits_{k\to\infty}\Bigl|\int_X\psi\,\bigl(H_m(\max(u,-k))-H_m(u)\bigr)\Bigr|=0.
\end{equation}
From equality \eqref{eq3.1}, equality \eqref{eq3.1b}, inequality \eqref{e3.2} and equality \eqref{eq3.3}, it remains to prove that 
	\begin{equation}\label{eq3.4b}\lim_{k\to\infty}\limsup_{j\to\infty}\int_{u_j\leq -k}H_m(u_j)=0.
		\end{equation}
	Indeed, given $\varepsilon>0$ take $k_\varepsilon\geq 1$ such that 
	$\int_{ u\leq -k_\varepsilon+1}H_m(u)\leq \varepsilon/4$. We have 
	\begin{equation}\label{eq3.6a}
	\begin{aligned}
	&\int_{\{u_j\leq -k_\varepsilon\}}H_m(u_j)=\int_XH_m(u_j)-\int_{\{u_j>-k_\varepsilon\}}H_m(u_j)\\
	&=\int_X\omega^n-\int_{\{u_j>-k_\varepsilon\}}H_m(\max(u_j,-k_\varepsilon))\\
	&\leq \int_X\omega^n-\int\limits_{\{u_j>-k_\varepsilon\}\cap\{|u_j-u|
		\leq 1\}}H_m(\max(u_j,-k_\varepsilon))\\
	&\leq \int_X\omega^n-\int\limits_{\{u>-k_\varepsilon+1\}\cap\{|u_j-u|\leq 1\}}H_m(\max(u_j,-k_\varepsilon))\\
	&= \int_X\omega^n-\int_{\{u>-k_\varepsilon+1\}}H_m(\max(u_j,-k_\varepsilon))\\
	&+\int_{\{ u>-k_\varepsilon+1\}\cap\{|u_j-u|> 1\}}H_m(\max(u_j,-k_\varepsilon))\\
	&\leq \int_X\omega^n-\int_{\{u>-k_\varepsilon+1\}}H_m(\max(u_j,-k_\varepsilon))+\int_{\{|u_j-u|> 1\}}H_m(\max(u_j,-k_\varepsilon)).
	\end{aligned}
\end{equation}
	Since $u_j\to u$ in $\Capm$,  there exists $j_1$ such that $Cap_{\omega,m}\bigl(|u_j-u|> 1\bigr)\leq \varepsilon/4k_\varepsilon^m$ for $j\geq j_1$. Hence, we have
	\begin{equation}\label{eq3.6}
		\int_{\{|u_j-u|> 1\}}H_m(\max(u_j,-k_\varepsilon))\leq \frac{\varepsilon}{4}
		\end{equation}
	  for all $j\geq j_1$.\\
	  \n Moreover, by quasicontinuity of $(\o,m)$-sh functions as in Proposition \ref{quasicontinuity}, we can take a function $\bar u\in C(X)$ and an open subset $U$ such that $\Capm(U)\leq \varepsilon/4k_\varepsilon^m$ and $\bar u=u$ on $X\smallsetminus U.$ Hence we have
	  \begin{align*}
	  	&\int_{\{\bar u>-k_\varepsilon+1\}}H_m(\max(u_j,-k_\varepsilon))=\int_{\{\bar u>-k_\varepsilon+1\}\cap (X\smallsetminus U)}H_m(\max(u_j,-k_\varepsilon))\\
	  	&+\int_{\{\bar u>-k_\varepsilon+1\}\cap U}H_m(\max(u_j,-k_\varepsilon)) \\
	  & \leq \int_{\{ u>-k_\varepsilon+1\}\cap (X\smallsetminus U)}H_m(\max(u_j,-k_\varepsilon))+\int_{U}H_m(\max(u_j,-k_\varepsilon))\\
	  &  \leq \int_{\{ u>-k_\varepsilon+1\}}H_m(\max(u_j,-k_\varepsilon))+ {\varepsilon\over 4}.
	  \end{align*}
	\n This implies that
\begin{equation}\label{eq3.8}  
	\int_{\{ u>-k_\varepsilon+1\}}H_m(\max(u_j,-k_\varepsilon))\geq \int_{\{\bar u>-k_\varepsilon+1\}}H_m(\max(u_j,-k_\varepsilon))-{\varepsilon\over 4}.
	\end{equation}
Note that, by Proposition \ref{capacity} we have $H_m(\max(u_j,-k_\varepsilon))\longrightarrow H_m(\max(u,-k_\varepsilon))$ as $j\to\infty.$ Moreover, since $\bar{u}\in C(X),$ we have $\ind_{\{\bar u>-k_\varepsilon+1\}}$ is \textcolor{red}{a} lower semicontinuous function. According to Lemma 1.9 in \cite{De93} we have
$$ \liminf\limits_{j\to\infty}\int_{\{\bar u>-k_\varepsilon+1\}}H_m(\max(u_j,-k_\varepsilon))\geq \int_{\{\bar u>-k_\varepsilon+1\}}H_m(\max(u,-k_\varepsilon)).$$
Coupling this with inequality \eqref{eq3.8} we obtain
\begin{equation}\label{eq3.9}
	\liminf\limits_{j\to\infty}	\int_{\{ u>-k_\varepsilon+1\}}H_m(\max(u_j,-k_\varepsilon))\geq \int_{\{\bar u>-k_\varepsilon+1\}}H_m(\max(u,-k_\varepsilon))-{\varepsilon\over 4}.
\end{equation}
Now, from inequality \eqref{eq3.6a}, inequality \eqref{eq3.6}, inequality \eqref{eq3.9} and Theorem \ref{comparison principle}
 we obtain
	\begin{align*}&\limsup\limits_{j\to\infty}\int_{\{u_j\leq -k_\varepsilon\}}H_m(u_j)\leq \int_X\omega^n-\liminf\limits_{j\to\infty} \int_{\{\bar u>-k_\varepsilon+1\}}H_m(\max(u_j,-k_\varepsilon))+{\varepsilon\over 4}\\
	&\leq \int_XH_m(\max(u,-k_\varepsilon))- \int_{\{\bar u>-k_\varepsilon+1\}}H_m(\max(u,-k_\varepsilon))+{\varepsilon\over 2}\\
	&=\int_{\{\bar u\leq -k_\varepsilon+1\}}H_m(\max(u,-k_\varepsilon))+{\varepsilon\over 2}\\
	&=\int_{\{\bar u\leq -k_\varepsilon+1\}\cap (X\smallsetminus U)}H_m(\max(u,-k_\varepsilon)) + \int_{\{\bar u\leq -k_\varepsilon+1\}\cap U}H_m(\max(u,-k_\varepsilon)) + {\varepsilon\over 2}\\
	& \leq \int_{\{ u\leq -k_\varepsilon+1\}\cap (X\smallsetminus U)}H_m(\max(u,-k_\varepsilon)) + \int_{ U}H_m(\max(u,-k_\varepsilon)) + {\varepsilon\over 2}\\
	&\leq \int_{ \{u\leq -k_\varepsilon+1\}}H_m(\max(u,-k_\varepsilon))+{3\varepsilon\over 4}\\
	&= \int_{ \{u\leq -k_\varepsilon\}}H_m(\max(u,-k_\varepsilon))+\int_{ \{-k_\varepsilon<u\leq -k_\varepsilon+1\}}H_m(u)+{3\varepsilon\over 4}\\
	&= \int_X\omega^n-\int_{ \{u>-k_\varepsilon\}}H_m(u)+\int_{ \{-k_\varepsilon<u\leq -k_\varepsilon+1\}}H_m(u)+{3\varepsilon\over 4}\\
	&=\int_{ \{u\leq -k_\varepsilon+1\}}H_m(u)+{3\varepsilon\over 4}\leq \varepsilon,
	\end{align*}
	which yields inequality \eqref{eq3.4b}. The proof of theorem \ref{th3.1} is complete.
\end{proof}

\n We state a criterion for an uniformly absolutely continuity with respect to $(\o,m)$-capacity of Hessian measures of functions in $\mathcal{E}(X, \omega, m)$. 

\begin{proposition}\label{pro3.1} Let $u_j\in\mathcal{E}^-(X,\omega, m).$ If
	\begin{equation}\label{(*)}\lim\limits_{t\to +\infty}\varlimsup\limits_{j\to\infty} \int\limits_{\{u_j\leq -t\}} H_m(u_j)=0
	\end{equation}
	then $H_m(u_j)\ll \Capm$ uniformly for $j\geq 1$.
\end{proposition}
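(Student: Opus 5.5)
Given $\epsilon>0$, we look for $\delta>0$ with $\int_E H_m(u_j)<\epsilon$ for every Borel $E$ with $\Capm(E)<\delta$ and every $j$. We split $E$ according to whether $u_j\le -t$ or $u_j>-t$, and use hypothesis \eqref{(*)} for the first part and the capacity bound on bounded truncations for the second.

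\emph{Choice of $t$ and $j_0$.} By \eqref{(*)} there are $t_0>0$ and $j_0$ such that
$$\int_{\{u_j\le -t_0\}}H_m(u_j)<\frac{\epsilon}{2}\quad\text{for all } j\ge j_0 .$$
Each of the finitely many $u_1,\dots,u_{j_0-1}$ lies in $\EcXo$, so $\int_{\{u_j\le -t\}}H_m(u_j)\to 0$ as $t\to\infty$ for each of them. Enlarging $t_0$ to some $t\ge t_0$, we get $\int_{\{u_j\le -t\}}H_m(u_j)<\epsilon/2$ for all $j\ge 1$. Without loss of generality $t\ge 1$.

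\emph{The bounded part.} On the open set $\{u_j>-t\}$ we have $u_j=\max(u_j,-t)$. By Theorem \ref{comparison principle}(ii), applied with $\varphi=-t$ (a constant, hence $(\omega,m)$-sh) and $\psi=u_j$, we obtain
$$\ind_{\{u_j>-t\}}H_m(u_j)=\ind_{\{u_j>-t\}}H_m(\max(u_j,-t)).$$
The function $v_j:=\max(u_j,-t)/t$ is $(\omega/t,m)$-sh, but it is not $(\omega,m)$-sh. To fix this, expand
$$\omega+dd^c\max(u_j,-t)=t\Bigl(\omega+dd^c v_j\Bigr)-(t-1)\omega .$$
The cleaner route is the standard estimate $H_m(w)\le t^m\,\Capm$ for $w\in\SHXo$ with $-t\le w\le 0$. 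This follows because $(w+t)/t-1=w/t$ satisfies $\omega+dd^c(w/t)=\frac1t\omega_w+\bigl(1-\frac1t\bigr)\omega$, which is a sum of $m$-positive forms. Hence $w/t\in\SHXo$ with $-1\le w/t\le 0$. Multilinear expansion then gives
$$H_m(w/t)\ge t^{-m}\,\omega_w^m\wedge\omega^{n-m}=t^{-m}H_m(w),$$
since every cross term, being of the form $\omega_w^k\wedge\omega^{n-k}$ with $k\le m$, is a positive measure. Therefore $\int_E H_m(w)\le t^m\Capm(E)$. This inequality is also what the proof of Theorem \ref{th3.1} already uses implicitly in \eqref{eq3.6}.

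\emph{Conclusion.} Apply this with $w=\max(u_j,-t)$; it lies in $[-t,0]$ because $u_j\le 0$, as $u_j\in\mathcal{E}^-$. For any Borel $E$ this gives
$$\int_E H_m(u_j)\le \int_{\{u_j\le -t\}}H_m(u_j)+\int_{E\cap\{u_j>-t\}}H_m(\max(u_j,-t))<\frac{\epsilon}{2}+t^m\Capm(E).$$
Choosing $\delta=\epsilon/(2t^m)$ finishes the proof. The main point to check carefully is the positivity of the cross terms in the expansion, i.e.\ that $\omega_w^k\wedge\omega^{n-k}\ge 0$ for $k\le m$ when $w$ is bounded and $(\omega,m)$-sh. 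This holds by approximating $w$ by a decreasing sequence of smooth $(\omega,m)$-sh functions, which exists by \cite{CD15}, and using Bedford--Taylor continuity along decreasing sequences. The $t$-uniformity over all $j$ is handled by the finite-index enlargement above.
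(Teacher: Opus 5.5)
Your proof is correct and follows the paper's argument. You split $E$ along $\{u_j\le -t\}$, control that piece by \eqref{(*)}, and bound the rest by $t^m\Capm(E)$ via $H_m(\max(u_j,-t))\le t^m\Capm$. The only difference is how you treat the finitely many indices $j<j_0$: you enlarge $t$, using $\int_{\{u_j\le -t\}}H_m(u_j)\to 0$ for each $u_j\in\EcXo$, whereas the paper invokes Lemma \ref{lm3.2} for each of them, which rests on the same fact.

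Two stray remarks in your write-up are wrong, though neither is used. Your aside that $\max(u_j,-t)/t$ is not $(\omega,m)$-sh is false for $t\ge 1$, as your own next computation shows. Also, $\{u_j>-t\}$ need not be open, since $u_j$ is only upper semicontinuous.
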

\n We need the following lemma which is independent interest.
\begin{lemma}\label{lm3.2}
	If $v\in\mathcal{E}(X, \omega, m),$ then $H_m(u)\ll\Capm$ on $X$ uniformly for all $u\in\SH_m(X,\o) $ with $0\geq u\geq v$ in $X.$
\end{lemma}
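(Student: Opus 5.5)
Fix $\varepsilon>0$, a Borel set $E\subset X$ and $u\in\SH_m(X,\o)$ with $v\leq u\leq 0$. We want $\delta>0$, depending only on $v$ and $\varepsilon$, such that $\Capm(E)<\delta$ forces $\int_E H_m(u)<\varepsilon$. The strategy is to split $E$ at a level $-k$ of $u$:
$$\int_E H_m(u)\leq \int_{E\cap\{u>-k\}}H_m(u)+\int_{\{u\leq -k\}}H_m(u).$$

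\textbf{The piece above level $-k$.} By the maximum principle (Theorem \ref{comparison principle}(ii)) we have $\ind_{\{u>-k\}}H_m(u)=\ind_{\{u>-k\}}H_m(\max(u,-k))$. The function $w=\max(u,-k)/k$ satisfies $-1\leq w\leq 0$ and
$$H_m(\max(u,-k))=(\o+dd^c(kw))^m\w\o^{n-m}\leq k^m(\o+dd^cw)^m\w\o^{n-m}=k^mH_m(w).$$
Expanding $\o+dd^c(kw)=(1-k)\o+k\,\o_w$ and using the mixed-form bounds, the constant is $(2k)^m$, or $k^m$ up to harmless factors. Hence the first term is at most $C_mk^m\Capm(E)$.

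\textbf{The tail.} The tail must be bounded uniformly in $u$ using only $v$, which I expect to be the main step. Note $\{u\leq -k\}\subset\{v\leq -k\}$, but the measure is $H_m(u)$, not $H_m(v)$, so the plan is a comparison-principle argument.
\begin{itemize}
\item Since $u\geq v$, we have $\{u\leq -k\}\subset\{v<u/2-k/2\}$ on the set where $u\leq -k$. Set $\psi=u/2-k/2$; by convexity of $\EcXo$ and since $u\in\EcXo$ (as $u\geq v$, Proposition 5.10 in \cite{CD15} via $\max$), $\psi$ lies in the class up to the constant shift.
\item Theorem \ref{comparison principle}(i) then gives
$$2^{-m}\int_{\{u\leq -k\}}H_m(u)\leq \int_{\{v<\psi\}}H_m(\psi)\leq\int_{\{v<\psi\}}H_m(v)\leq \int_{\{v<-k/2\}}H_m(v).$$
The middle inequality also uses $H_m(\psi)\geq 2^{-m}H_m(u)$, which follows from $\o_\psi=\frac12\o+\frac12\o_u$ after expanding and dropping nonnegative mixed terms. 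Those mixed terms are positive because each factor is an $(\o,m)$-positive form wedged with $\o^{n-m}$. The last inequality holds because $\psi\leq -k/2$.
\end{itemize}
Since $v\in\EcXo$, the quantity $\int_{\{v<-k/2\}}H_m(v)$ tends to $0$ as $k\to\infty$, independently of $u$.

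\textbf{Choosing constants.} Choose $k$ so that $2^m\int_{\{v<-k/2\}}H_m(v)<\varepsilon/2$, then choose $\delta=\varepsilon/(2C_mk^m)$. This gives $\int_EH_m(u)<\varepsilon$ for all admissible $u$.

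The delicate points are justifying the comparison principle when $H_m(\psi)$ is defined only via the class $\EcXo$, and the positivity of the mixed terms in the expansion. The latter could alternatively be handled with Theorem \ref{partial cp} if a direct expansion is problematic.
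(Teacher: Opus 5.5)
Your argument is essentially the paper's proof. You split at a level of $u$ and bound the part above the level by $k^m\Capm(E)$ via $\max(u,-k)/k$. You then control the tail by comparing $v$ with a shifted $u/2$, using Theorem \ref{comparison principle}(i) and $H_m(u)\le 2^mH_m(u/2)$.

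There is, however, one step that is false as written: the inclusion $\{u\le -k\}\subset\{v<u/2-k/2\}$. On $\{u\le -k\}$ you only get $v\le u\le u/2-k/2$, with equality on the set $\{u=v=-k\}$. That set can carry positive $H_m(u)$-mass; for instance, with $u=\max(v,-k)$ the measure $H_m(u)$ typically charges the level set $\{v=-k\}$. Since the comparison principle only controls the strict set $\{v<\psi\}$, the tail estimate as you wrote it does not cover all of $\{u\le -k\}$.

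The fix is the unit shift the paper builds in. Take $\psi=(u-k+1)/2$ with $k\ge 1$. Then on $\{u\le -k\}$ one has $\psi\ge u+\tfrac12>u\ge v$, so $\{u\le -k\}\subset\{v<\psi\}$. Since $\psi\le -(k-1)/2$, the tail is bounded by $2^m\int_{\{v<-(k-1)/2\}}H_m(v)$, which tends to $0$ because $v\in\EcXo$. The paper does the same thing with $\{u<-2k+2\}\subset\{v<\frac u2-k+1\}$ and the overlapping cover $X=\{u<-2k+2\}\cup\{u>-2k\}$.

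A smaller point concerns the bound $H_m(kw)\le k^mH_m(w)$ for $w=\max(u,-k)/k$ and $k\ge1$. Justify it by writing $k(\omega+dd^cw)=(\omega+dd^c(kw))+(k-1)\omega$. Expanding the $m$-th power wedged with $\omega^{n-m}$ then gives only nonnegative terms. Your expression $(1-k)\omega+k\,\omega_w$ has a negative coefficient, so it does not directly give the inequality. With these two corrections your proof goes through.
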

\begin{proof}
	Given $E\subset X$ and $u\in\SH_m(X,\o) $ with $0\geq u\geq v$ in $X.$ For each $k>0,$ by Theorem \ref{comparison principle} we have
	\begin{align*}
		\int_{E}H_m(u)&\leq \int_{u<-2k+2}H_m(u)+\int_{E\cap \{u>-2k\}}H_m(u)\\
		&\leq\int_{\{v<\frac{u}{2}-k+1\}}H_m(u)+\int_{E}H_m(\max(u,-2k))\\
		&\leq 2^m\int_{\{v<\frac{u}{2}-k+1\}}H_m(\frac{u}{2})+2^mk^m\Capm(E)\\
		&\leq 2^m\int_{\{v<\frac{u}{2}-k+1\}}H_m(v)+2^mk^m\Capm(E)\\
		&\leq 2^m\int_{\{v<-k+1\}}H_m(v)+2^mk^m\Capm(E).
	\end{align*}
Note that, since $v\in\mathcal{E}(X, \omega,m)$  we have $\lim\limits_{k\to\infty}\int_{\{v<-k+1\}}H_m(v)=0.$ Therefore, for $\varepsilon>0,$ there exist $k_0$ such that $\int_{\{v<-k_0+1\}}H_m(v)\leq \frac{\varepsilon}{2^{m+1}}.$ We choose $\delta=\frac{\varepsilon}{2^{m+1}{k^m_0}}.$ Obviously, if $\Capm(E)\leq\delta$ then $\int_{E}H_m(u)\leq\varepsilon.$
This yields that $H_m(u)\ll \Capm $ on $X$ uniformly for all such functions $u.$ The proof of Lemma \ref{lm3.2} is complete.
	\end{proof}
\n Now, we will prove Proposition \ref{pro3.1}.
\begin{proof}    Fix $\e>0$. By equation (\ref{(*)}), we choose $t_0$ and then $j_0=j_0(t_0)>1$ such that
	$$\int\limits_{\{u_j\leq -t_0\}}H_m(u_j)<\e,$$
	for all $j\geq j_0$. For each Borel set $E\subset X,$ by Theorem \ref{comparison principle} we can estimate
	\begin{align*}\int\limits_E H_m(u_j)&=\int\limits_{E\cap\{u_j\leq -t_0\}}H_m(u_j)+\int\limits_{E\cap\{u_j>-t_0\}}H_m(u_j)\\
		&\leq\int\limits_{\{u_j\leq -t_0\}}H_m(u_j)+\int\limits_{E\cap\{u_j>-t_0\}} H_m(\max (u_j,-t_0))\\
		&\leq\int\limits_{\{u_j\leq -t_0\}}H_m(u_j)+t_0^m\Capm(E)\leq\e+t_0^m\Capm(E)
	\end{align*}
	for all $j\geq j_0$.\\
	 On the other hand, by Lemma \ref{lm3.2}, for each $k\in\{1,2,\cdots,j_0\}$ we can choose $\delta_k>0$ such that $H_m(u_k)(E)<\e$  for all Borel sets $E\subset X$ with $\Capm(E)<\delta_k$. Choose $\delta^{'}=\min(\delta_1,\ldots,\delta_{j_0}).$  This implies that $H_m(u_k) (E)<\e$ for all Borel sets $E\subset X$ with $\Capm(E)<\delta^{'}$ for every $k\in\{1,2,\ldots,j_0\}.$
	  Hence
	$H_m(u_j) (E)<2\e$ for all $j\geq 1$ and all  Borel sets $E\subset X$, such that $\Capm(E)<\delta=\min (\delta^{'},\frac {\e}{t_0^m})$. Thus, we obtain $H_m(u_j)\ll \Capm$ uniformly for $j\geq 1$. The proof of Proposition \ref{pro3.1} is complete.
\end{proof}

\n We state the following lemma which is independent interest.
\begin{lemma}\label{lm3.4}
	Let $u_j,v_j\in\mathcal{E}^{-}(X, \omega,m)$ be such that $u_j\geq v_j$ for all $j.$ If  $H_m(v_j)\ll\Capm$  uniformly for all $j$ and $\inf_{j\geq 1}\sup_{X}v_j>-\infty$ then  $H_m(u_j)\ll\Capm$  uniformly for all $j$
\end{lemma}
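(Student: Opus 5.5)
The idea is to mimic the proof of Lemma \ref{lm3.2}, with $v_j$ in place of the fixed $v$. The uniform absolute continuity of $H_m(v_j)$ replaces the tail estimate for a single $v$. Set $M:=-\inf_{j}\sup_X v_j<\infty$.

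\textbf{Step 1: bounding $H_m(u_j)(E)$.} Fix a Borel set $E\subset X$ and $k>0$. Split $E$ into $E\cap\{u_j\leq -2k+2\}$ and $E\cap\{u_j>-2k\}$. On the second piece, Theorem \ref{comparison principle}(ii) gives $H_m(u_j)=H_m(\max(u_j,-2k))$. Since $\max(u_j,-2k)/(2k)$ is a competitor in the definition of $\Capm$, this piece is at most $(2k)^m\Capm(E)$.

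For the first piece I argue as in Lemma \ref{lm3.2}:
\begin{equation*}
\{u_j<-2k+2\}\subset\{v_j<\tfrac{u_j}{2}-k+1\},
\end{equation*}
using $v_j\leq u_j\leq 0$. Write $H_m(u_j)\leq 2^mH_m(u_j/2)$, which holds because $\omega+dd^c(u_j/2)=\tfrac12(\omega+\omega_{u_j})\geq\tfrac12\omega_{u_j}$ and $u_j/2\in\EcXo$ by convexity. Then apply Theorem \ref{comparison principle}(i) to $\varphi=v_j$, $\psi=u_j/2-k+1$. This gives
\begin{equation*}
\int_{\{u_j\leq -2k+2\}}H_m(u_j)\leq 2^m\int_{\{v_j<-k+1\}}H_m(v_j),
\end{equation*}
where the last set appears because $u_j/2\leq 0$. (To handle the boundary case $u_j=-2k+2$, shift $k$ by one.)

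\textbf{Step 2: capacity of the sublevel sets of $v_j$.} Proposition \ref{md2.12} yields
\begin{equation*}
\Capm(\{v_j<-k+1\})\leq \frac{(2m+1)|\sup_X v_j|+m}{k-1}\leq\frac{(2m+1)M+m}{k-1},
\end{equation*}
uniformly in $j$. This is exactly where the hypothesis $\inf_j\sup_X v_j>-\infty$ enters.

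\textbf{Step 3: choosing the parameters.} Given $\varepsilon>0$, uniform absolute continuity of $\{H_m(v_j)\}$ gives $\delta_0>0$ such that $\Capm(F)<\delta_0$ implies $H_m(v_j)(F)<\varepsilon/2^{m+1}$ for all $j$. Choose $k$ large so that $\frac{(2m+1)M+m}{k-1}<\delta_0$. Then the first piece is less than $\varepsilon/2$ for every $j$. Finally take $\delta=\varepsilon/(2^{m+1}k^m)$: whenever $\Capm(E)<\delta$, we get $H_m(u_j)(E)<\varepsilon$ for all $j$.

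\textbf{Main obstacle.} The delicate point is the comparison step in Step 1. One has to check that the comparison principle applies to the shifted function $u_j/2-k+1$: adding a constant preserves $\EcXo$ and does not change $H_m$. One also has to check that $H_m(u_j)\leq 2^mH_m(u_j/2)$ holds for the non-bounded Hessian measure in $\EcXo$. I would verify this on $\{u_j>-s\}$ via $\max(u_j,-s)$ and then let $s\to\infty$, using that these measures put no mass on $\{u_j=-\infty\}$.
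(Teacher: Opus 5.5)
Your proof is correct and is essentially the paper's argument. You use the same comparison chain $\int_{\{u_j<-2k+2\}}H_m(u_j)\le 2^m\int_{\{v_j<-k+1\}}H_m(v_j)$, with Proposition~\ref{md2.12} and the uniform absolute continuity of $H_m(v_j)$ making the right side uniformly small. The only difference is that you carry out the splitting of $E$ directly, where the paper turns the bound into the tail condition of Proposition~\ref{pro3.1} and invokes that result; since your tail bound holds for every $j$, you also avoid treating finitely many indices separately via Lemma~\ref{lm3.2}.
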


\begin{proof}
	
	\n By Proposition \ref{md2.12} we have $$Cap_{\o,m}(\{v_j<-t\})\leq \frac {\big[(2m+1)|\sup\limits_X v_j|+m\big]} t.$$ Put the above inequality together with the assumption $\inf_{j\geq 1}\sup_{X}v_j>-\infty$ and  $H_m(v_j)\ll\Capm$  uniformly for all $j$ we infer that
	$\lim\limits_{t\to +\infty}\varlimsup\limits_{j\to\infty} \int\limits_{\{v_j\leq -t\}} H_m(v_j)=0.$ 
	On the other hand, by Theorem \ref{comparison principle}, we have 
	\begin{align*}\int_{\{u_j<-2t\}}H_m(u_j)&\leq \int_{\{v_j<\frac{u_j}{2}-t\}}H_m(u_j)\leq 2^m\int_{\{v_j<\frac{u_j}{2}-t\}}H_m(\frac{u_j}{2})\\
		&\leq 2^m\int_{\{v_j<\frac{u_j}{2}-t\}}H_m(v_j)\leq 2^m\int_{\{v_j<-t\}}H_m(v_j).
	\end{align*}
	Therefore, we deduce that 	$\lim\limits_{t\to +\infty}\varlimsup\limits_{j\to\infty} \int\limits_{\{u_j\leq -2t\}} H_m(u_j)=0.$ According to Proposition \ref{pro3.1}, we imply that $H_m(u_j)\ll\Capm$  uniformly for all $j.$ The proof of Lemma \ref{lm3.4} is complete.
\end{proof}

\begin{theorem}\label{th3.5}
Let $u_j\in\EcXo, u\in \SH^{-}_m(X,\o)$ be such that $u_j\to u$ in $\Capm.$ Then the following statements are equivalent:\\
\n (i) $u\in\EcXo$;\\

\n (ii) $H_m(u_j)\ll \Capm$ uniformly for $j\geq 1.$
\end{theorem}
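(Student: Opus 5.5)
We prove both implications. Each one reduces to the tail quantity $\int_{\{u_j\le -t\}}H_m(u_j)$, which we control using Proposition \ref{pro3.1} and the estimates from the proof of Theorem \ref{th3.1}.

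\emph{(i)$\Rightarrow$(ii).} Assume $u\in\EcXo$. By Proposition \ref{pro3.1} it suffices to show
$$\lim_{t\to+\infty}\limsup_{j\to\infty}\int_{\{u_j\le -t\}}H_m(u_j)=0.$$
This is exactly statement \eqref{eq3.4b}. Its derivation in the proof of Theorem \ref{th3.1} used only three ingredients: $u_j,u\in\EcXo$, the convergence $u_j\to u$ in $\Capm$, and Proposition \ref{capacity} applied to $\max(u_j,-k)$ and $\max(u,-k)$. All three are available here, so we take \eqref{eq3.4b} as given.

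Proposition \ref{pro3.1} is stated for functions in $\mathcal E^-$, while the $u_j$ need not be negative. Since $u_j\to u$ in capacity and $u\le 0$, the suprema $\sup_X u_j$ should be bounded above; I would verify this (or normalize by subtracting constants, which alters the sublevel sets only by a bounded shift) before invoking the proposition. Applying Proposition \ref{pro3.1} then gives $H_m(u_j)\ll\Capm$ uniformly in $j$.

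\emph{(ii)$\Rightarrow$(i).} Assume $H_m(u_j)\ll\Capm$ uniformly, and fix $k$. We first get a lower bound on the mass of $H_m(u_j)$ on $\{u_j>-k\}$. By Proposition \ref{md2.12}, $\Capm(\{u_j\le -k\})\le C/k$, where $C$ is uniform in $j$ because $\sup_X u_j$ is bounded; this uniformity also needs to be checked. Uniform absolute continuity then gives
$$\int_{\{u_j\le -k\}}H_m(u_j)\le \varepsilon(k)\quad\text{for all } j,\qquad \varepsilon(k)\to 0 .$$
Since $H_m(u_j)$ has total mass $\int_X\o^n=1$ and agrees with $H_m(\max(u_j,-k))$ on $\{u_j>-k\}$ (Theorem \ref{comparison principle}(ii)), we obtain
$$\int_{\{u_j>-k\}}H_m(\max(u_j,-k))\ge 1-\varepsilon(k).$$

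Next we pass to the limit in $j$ to obtain the same bound for $u$. By Proposition \ref{capacity}, $H_m(\max(u_j,-k))\to H_m(\max(u,-k))$ weakly. To transfer mass from the sets $\{u_j>-k\}$ to $\{u>-k+1\}$, we repeat the argument of \eqref{eq3.6a}--\eqref{eq3.9}. First discard $\{|u_j-u|>1\}$, which costs at most $k^m\Capm(\{|u_j-u|>1\})\to0$. Then replace $u$ by a continuous $\bar u$ outside an open set of small capacity, so that the sets involved become closed and upper semicontinuity of mass under weak limits applies. This yields
$$\int_{\{u>-k\}}H_m(\max(u,-k))\ge\int_{\{u\ge -k+1\}}H_m(\max(u,-k))\ge 1-\varepsilon(k)-o(1).$$
Letting $k\to\infty$ gives $\lim_k\int_{\{u>-k\}}H_m(\max(u,-k))=1=\int_X\o^n$, which is the definition of $u\in\EcXo$.

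The main obstacle is this limit passage in (ii)$\Rightarrow$(i). Weak convergence by itself only bounds mass from above on closed sets, and the sets $\{u\ge -k+1\}$ are not closed. The quasicontinuity trick handles this: one works with $\{\bar u\ge -k+1\}$, which is closed, to bound limits from above, and controls the exceptional open set $U$ by $k^m\Capm(U)$. The indices must be chosen carefully so that every error term vanishes as first $j\to\infty$ and then $k\to\infty$.
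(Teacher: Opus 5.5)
Your outline follows the paper's proof. For (i)$\Rightarrow$(ii), both arguments reduce to the tail estimate \eqref{eq3.4b} plus Proposition \ref{pro3.1}. Quoting \eqref{eq3.4b} from the proof of Theorem \ref{th3.1} is legitimate, because that proof uses exactly the hypotheses available under (i). For (ii)$\Rightarrow$(i), both arguments pass $H_m(\max(u_j,-k))\to H_m(\max(u,-k))$ through a semicontinuity inequality on a set defined by $u$. Your (ii)$\Rightarrow$(i), however, has one step that fails as written and rests on a false premise.

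\textbf{The index step.} From $u_j>-k$ and $|u_j-u|\le 1$ you only get $u>-k-1$, not $u\ge -k+1$. A lower bound for $H_m(\max(u,-k))$ on $\{u\ge -k-1\}$ says nothing about $\{u>-k\}$. So the displayed inequality $\int_{\{u\ge -k+1\}}H_m(\max(u,-k))\ge 1-\varepsilon(k)-o(1)$ does not follow. Start instead from $\{u_j>-k+2\}$. On this set $H_m(\max(u_j,-k))=H_m(u_j)$, which has mass at least $1-\varepsilon(k-2)$ there. Discarding $\{|u_j-u|>1\}$ then lands you in $\{u\ge -k+1\}\subset\{u>-k\}$.

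\textbf{The ``main obstacle''.} It does not exist. Since $u$ is upper semicontinuous, $F=\{u\ge -k+1\}$ is closed. Hence $\limsup_j H_m(\max(u_j,-k))(F)\le H_m(\max(u,-k))(F)$, which is exactly the inequality you need, and no quasicontinuity is required. Quasicontinuity is needed only in the opposite direction, as in Theorem \ref{th3.1}, where one needs $\liminf_j$ on sets $\{u>c\}$ that are not open. The paper uses your fact in complementary form: $\{u<-k+1\}$ is open, so $\ind_{\{u<-k+1\}}$ is lower semicontinuous.

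\textbf{The uniform bound on $\sup_X u_j$.} You flagged this twice; it is true but has to be proved. Apply Proposition \ref{md2.12} to $u_j-\sup_X u_j$, and use $\Capm(X)=1$ and subadditivity. This gives $\Capm(\{u_j\ge \sup_X u_j-2m\})\ge \frac12$, and likewise $\Capm(\{u\ge -t\})\ge\frac12$ for $t$ large. Since $u\le 0$, $\sup_X u_j\to\pm\infty$ along a subsequence would force $\Capm(\{|u_j-u|>1\})\ge \frac12$ infinitely often, a contradiction. The upper bound is also what you need for $H_m(\max(u_j,-k))\le k^m\Capm$ and for Proposition \ref{pro3.1}; the paper assumes it tacitly.

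For the lower bound you can also do what the paper does. It uses the inclusion $\{u_j<-k+\frac54\}\subset\{u<-k+\frac32\}\cup\{|u_j-u|>\frac14\}$. Uniform absolute continuity is then applied only to sublevel sets of $u$ and to $\{|u_j-u|>\frac14\}$. So you never need $\Capm(\{u_j\le -k\})$ to be small uniformly in $j$.
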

\begin{proof}
	(i) $\Rightarrow$ (ii) According to Proposition \ref{pro3.1}, it is enough to prove that 
	$$\lim\limits_{t\to +\infty}\varlimsup\limits_{j\to\infty} \int\limits_{\{u_j\leq -t\}} H_m(u_j)=0.$$
	Indeed, it is easy to check that
	$$\{u_j\leq -t\}\subset \{u\leq -t+1\}\cup \{|u_j-u|>1\}.$$
	 By Proposition \ref{comparison principle} we have
	 \begin{equation}\label{e3.11}
	\begin{aligned}&\int\limits_{\{u_j\leq -t\}} H_m(u_j)= \int\limits_{X} H_m(u_j)-\int\limits_{\{u_j> -t\}} H_m(u_j)\\
	&=\int\limits_{X}\o^n-\int\limits_{\{u_j>-t\}} H_m(\max{(u_j,-t)})\\
	&=\int\limits_{X}H_m(\max{(u_j,-t)})-\int\limits_{\{u_j>-t\}} H_m(\max{(u_j,-t)})\\
	 &=\int\limits_{\{u_j\leq -t\}} H_m(\max{(u_j,-t)})\\
	 &\leq\int\limits_{\{u\leq -t+1\}} H_m(\max (u_j,-t))+\int\limits_{\{|u_j-u|>1\}} H_m(\max (u_j,-t))\\
	&\leq\int\limits_{\{u\leq -t+1\}} H_m(\max (u_j,-t))+t^m\Capm(\{|u_j-u|>1\}).
	\end{aligned}
\end{equation}

\n Fix $\e>0.$ Since $\max(u,-t)\geq u\in\EcXo$ for all $t,$ according to Lemma \ref{lm3.2} we have $H_m(\max(u,-t))\ll \Capm$ for every $t.$ Hence, there exists $\delta>0$ such that $H_m(\max(u,-t))(E)<\frac{\e}{3}$ for all Borel set $E\subset X$ satisfying $\Capm(E)<\delta.$  Since $\lim\limits_{t\to\infty}\Capm(\{u<-t+1\})=0,$ there exists $t_0$ such that $\Capm(\{u<-t_0+1\})<\delta.$ This implies that 
\begin{equation}\label{eq3.12b}H_m(\max(u,-t_0))(\{u<-t_0+1\}) <\frac{\e}{3}.
	\end{equation}
By Propostion \ref{quasicontinuity} there exist $\bar{u}\in C(X)$ and an open subset $U$ such that \begin{equation}\label{eq3.12c}\Capm(U)\leq\frac{\e}{3t^m_0},\end{equation}

\n and $\bar{u}= u $ on $X\setminus U$.
We have 
\begin{equation}\label{eq3.12}
	\begin{aligned}
		&\int\limits_{\{u\leq -t+1\}} H_m(\max (u_j,-t))\\
		&=\int\limits_{\{u\leq -t+1\}\cap (X\smallsetminus U)} H_m(\max (u_j,-t)) +\int\limits_{\{u\leq -t+1\}\cap U} H_m(\max (u_j,-t))\\
		&\leq\int\limits_{\{\bar{u}\leq -t+1\}\cap (X\smallsetminus U)} H_m(\max (u_j,-t))+\int\limits_{ U} H_m(\max (u_j,-t))\\
		&\leq \int\limits_{\{\bar{u}\leq -t+1\}\cap (X\smallsetminus U)} H_m(\max (u_j,-t))+t^m\Capm(U).
	\end{aligned}
\end{equation}
Note that according to Proposition \ref{capacity} we have $H_m(\max (u_j,-t))\to H_m(\max (u,-t))$ weakly as $j\to\infty.$ Moreover, $\ind_{\{\{\bar{u}\leq -t+1\}\cap (X\smallsetminus U) \}}$  is upper semicontinuous. Thus, by Lemma 1.9 in \cite{De93} we obtain 
$$ \lim\limits_{j\to\infty} \int\limits_{\{\bar{u}\leq -t+1\}\cap (X\smallsetminus U)} H_m(\max (u_j,-t))\leq \int\limits_{\{\bar{u}\leq -t+1\}\cap (X\smallsetminus U)} H_m(\max (u,-t)).$$
Therefore, there exists $j_1$ such that 
$$\int\limits_{\{\bar{u}\leq -t_0+1\}\cap (X\smallsetminus U)} H_m(\max (u_j,-t_0))\leq \int\limits_{\{\bar{u}\leq -t_0+1\}\cap (X\smallsetminus U)} H_m(\max (u,-t_0)) $$ for all $j\geq j_1.$\\
On the other hand, since $u_j\to u$ in $\Capm.$ There exists $j_2$ such that we have
$$ \Capm(\{|u_j-u|>1\})\leq \frac{\e}{3t^m_0}$$ for all $j\geq j_2.$\\
Hence, for all $j\geq j_0=\max(j_1,j_2),$
it follows from inequality \eqref{eq3.12} that 
\begin{equation}\label{eq3.13}
	\begin{aligned}&\int\limits_{\{u\leq -t_0+1\}} H_m(\max (u_j,-t_0))\\
		&\leq \int\limits_{\{\bar{u}\leq -t_0+1\}\cap (X\smallsetminus U)} H_m(\max (u,-t_0)) +t_0^m\Capm(U)\\
		&\leq \int\limits_{\{u\leq -t_0+1\}\cap (X\smallsetminus U)} H_m(\max (u,-t_0)) +t^m_0\Capm(U)\\
		& \leq \int\limits_{\{u\leq -t_0+1\}} H_m(\max (u,-t_0)) +t^m_0\Capm(U)\\
		&\leq \frac{2\e}{3},
	\end{aligned}
	\end{equation}
where the last inequality follows from inequality \eqref{eq3.12b} and  inequality \eqref{eq3.12c}.
\n Coupling inequality \eqref{e3.11} and inequality \eqref{eq3.13}, we get 
$$\int_{u_j\leq  t_0}H_m(u_j)\leq \e $$ for all $j\geq j_0.$\\
	 This means we have 
	$$\lim\limits_{t\to +\infty}\varlimsup\limits_{j\to\infty}\int\limits_{\{u_j\leq -t\}} H_m(u_j)=0.$$
	Hence, by Proposition \ref{pro3.1} we infer that $H_m(u_j)\ll \Capm$ uniformly for $j\geq 1$ and the desired conclusion follows.
	
\n	(ii) $\Rightarrow$ (i) For any fixed $k>0,$ by Proposition \ref{capacity} we have  $H_m(\max(u_j,-k))\to H_m(\max(u,-k))$ weakly as $j\to\infty$. Moreover, we also have $\ind_{\{u<-k+1\}}$ is a lower semicontinuous function. Hence, by Lemma 1.9 in \cite{De93} we obtain
	\begin{align*}&\int_{u<-k+1}H_m(\max(u,-k))\leq \lim\limits_{j\to\infty}\int_{u<-k+1}H_m(\max(u_j,-k))\\
		&\leq \limsup\limits_{j\to\infty}\int_{u_j<-k+{5\over 4}}H_m(\max(u_j,-k)) +\limsup\limits_{j\to\infty}\int_{|u_j-u|>{1\over 4}}H_m(\max(u_j,-k))\\
		&\leq \limsup\limits_{j\to\infty}\int_{u_j<-k+{5\over 4}}H_m(\max(u_j,-k)) +\limsup\limits_{j\to\infty}k^m\Capm\Bigl({|u_j-u|>{1\over 4}}\Bigr)\\
		&=\limsup\limits_{j\to\infty}\int_{u_j<-k+{5\over 4}}H_m(\max(u_j,-k))\\
		&=\limsup\limits_{j\to\infty}\Bigl(\int_XH_m(\max(u_j,-k))-\int_{u_j\geq -k+{5\over 4}}H_m(\max(u_j,-k))\Bigr)\\
		&=\limsup\limits_{j\to\infty}\Bigl(\int_XH_m(u_j)-\int_{u_j\geq -k+{5\over 4}}H_m(u_j)\Bigr)\\
	&=\limsup\limits_{j\to\infty}\int_{u_j< -k+{5\over 4}}H_m(u_j)\\
	&\leq \limsup\limits_{j\to\infty}\int_{u< -k+{3\over 2}}H_m(u_j)+\limsup\limits_{j\to\infty}\int_{|u_j-u|>{1\over 4}}H_m(u_j).
	\end{align*}
On one hand, according to the hypothesis $H_m(u_j)\ll \Capm$, for every $\varepsilon>0,$ there exists $\delta>0$ such that for every Borel subset $E\subset X$ satisfies $\Capm(E)<\delta,$ we have $H_m(u_j)(E)<\varepsilon$ for all $j.$ We also have $\lim\limits_{k\to\infty}\Capm(\{u< -k+{3\over 2}\})=0.$ Hence,  there exists $k_0\in\mathbb{N}$ such that for all $k\geq k_0,$ we have $ \Capm(\{u< -k+{3\over 2}\})<\delta.$ This implies that $H_m(u_j)(\{u< -k+{3\over 2}\})<\varepsilon$ for all $j.$ Thus, we obtain that $\lim\limits_{k\to\infty}\limsup\limits_{j\to\infty} H_m(u_j) (\{u< -k+{3\over 2}\})=0.$
\\On the other hand, it follows from $H_m(u_j)\ll \Capm$ and $u_j\to u$ in $\Capm$ as $j\to\infty$ that 
$$\limsup\limits_{j\to\infty}\int_{|u_j-u|>{1\over 4}}H_m(u_j)=0. $$
Therefore, we have $\lim\limits_{k\to\infty}\int_{u<-k+1}H_m(\max(u,-k))=0.$
	According to Lemma 5.2 in \cite{CD15} we get $u\in {\cal E}(X,\omega,m)$. The proof of Theorem \ref{th3.5}  is complete.
\end{proof}

\n Combine Theorem \ref{th3.1} and Theorem \ref{th3.5} we get the following theorem.
\begin{theorem}
	 Let $u\in \SH^{-}_m(X,\omega)$. Suppose that there exists a sequence $u_j \in \EcXo$ is such that $u_j\to u$ in $\Capm$ on $X$, and $H_m(u_j)\ll \Capm$ on $X$ uniformly for all $j$. Then $u\in {\cal E}(X,\omega,m)$ and $H_m(u_j)\to H_m(u)$ weakly in $X$.
\end{theorem}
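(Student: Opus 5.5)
This is a direct concatenation of Theorem \ref{th3.5} and Theorem \ref{th3.1}, so the plan is short.

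\emph{Step 1: $u\in\EcXo$.} The hypotheses are exactly those of Theorem \ref{th3.5}:
\begin{itemize}
\item $u_j\in\EcXo$ and $u\in\SH^{-}_m(X,\o)$;
\item $u_j\to u$ in $\Capm$;
\item $H_m(u_j)\ll\Capm$ uniformly in $j$, which is condition (ii) there.
\end{itemize}
The implication (ii) $\Rightarrow$ (i) of Theorem \ref{th3.5} then gives $u\in\EcXo$. We do not rely on its statement alone, so it is worth recalling why it holds. For each fixed $k$, Proposition \ref{capacity} applied to the uniformly bounded sequence $\max(u_j,-k)$ gives weak convergence of $H_m(\max(u_j,-k))$. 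Lemma 1.9 of \cite{De93} then bounds $\int_{\{u<-k+1\}}H_m(\max(u,-k))$ by $\limsup_j$ of $H_m(u_j)$-masses of two sets: $\{u<-k+3/2\}$ and $\{|u_j-u|>1/4\}$. Both sets have small capacity, by Proposition \ref{md2.12} and by convergence in capacity respectively. The uniform absolute continuity makes both terms small, and Lemma 5.2 of \cite{CD15} concludes.

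\emph{Step 2: convergence of measures.} Now all $u_j$ and $u$ lie in $\EcXo$ and $u_j\to u$ in $\Capm$. Theorem \ref{th3.1} therefore applies verbatim and yields $H_m(u_j)\to H_m(u)$ weakly on $X$.

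\emph{Main obstacle.} The only point needing care is that the standing hypotheses match. Theorem \ref{th3.5} requires $u$ to be negative, which is assumed. It also uses that the $u_j$ may be taken in $\EcXo$ without a negativity assumption; this is harmless, since Proposition \ref{capacity} and the comparison principle Theorem \ref{comparison principle} do not need it. Beyond this check, the proof is a two-line citation, and no further estimate is required.
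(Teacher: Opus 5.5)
Your proposal is correct and matches the paper's argument: the paper also obtains this theorem by applying the implication (ii) $\Rightarrow$ (i) of Theorem \ref{th3.5} to get $u\in\EcXo$, and then Theorem \ref{th3.1} to get the weak convergence $H_m(u_j)\to H_m(u)$. One minor precision on your closing remark: Proposition \ref{capacity} needs $\max(u_j,-k)$ to be uniformly bounded, hence a uniform upper bound on $\sup_X u_j$, but this does follow here from $u_j\to u$ in $\Capm$ together with $u\leq 0$, since $\Capm(E)\geq\int_E\o^n$ and sup-normalized $(\o,m)$-sh functions are uniformly bounded in $L^1$.
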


\n Now, we give an important technical result by the following proposition.

\begin{proposition}\label{pro3.7} Let $u_j,v_j\in\mathcal{E}(X,\omega,m)$ be such that
	
	i) $H_m(u_j)\ll \Capm$ uniformly for $j\geq 1$;
	
	ii) $\lim\limits_{j\to \infty} \int\limits_{\{u_j<v_j-\delta\}}H_m(u_j)=0$ for all $\delta >0$.
	
	\n Then $\lim\limits_{j\to\infty}\Capm (\{u_j<v_j-\delta\})=0$ for all $\delta >0$.
\end{proposition}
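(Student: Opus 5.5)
The plan is to follow the classical argument of Xing for $\mathcal{E}(X,\o)$: bound $\int_{\{u_j<v_j-\delta\}}H_m(w)$ uniformly over all competitors $w\in\SHXo$ with $-1\leq w\leq 0$. To do this we build an auxiliary $(\o,m)$-sh function $\psi_j$ that carries a fixed multiple of $H_m(w)$ in its Hessian measure. We then transfer mass to $H_m(u_j)$ via the comparison principle, Theorem \ref{comparison principle}(i), applied with $\va=u_j$.

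\textbf{Reduction.} First replace $v_j$ by $\max(u_j,v_j)$. By Proposition 5.10 of \cite{CD15} this still lies in $\EcXo$, and it leaves the sets $\{u_j<v_j-\delta\}$ unchanged. Next we need a cut-off. Fix $t>0$ and split
$$\{u_j<v_j-\delta\}\subset \{u_j<-t\}\cup \bigl(\{u_j<v_j-\delta\}\cap\{u_j\geq -t\}\bigr).$$
The capacity of the first piece should be small for $t$ large, uniformly in $j$. If the $\sup_X u_j$ are uniformly bounded, this follows from Proposition \ref{md2.12}. Otherwise one must first extract such a bound from hypothesis i) (uniform absolute continuity together with total mass $1$). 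I expect this uniform control — of $\sup_X u_j$, and of $v_j$ from above on the relevant set, which is needed to make the convex-combination trick below quantitative — to be the main obstacle. I would first try to prove boundedness of $\sup_X u_j$ by contradiction: if $\sup_X u_j\to-\infty$ along a subsequence, the sets $\{u_j<-t\}$ exhaust $X$ and their $H_m(u_j)$-mass stays $1$. Proposition \ref{md2.12} applied to $u_j-\sup_X u_j$ should then contradict the uniform continuity in i). The upper bound for $v_j$ I would handle by also replacing $v_j$ with $\min(v_j,C)$-type truncations, or with $\max(u_j,v_j-\delta/2)$ on the relevant set.

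\textbf{Main estimate.} Fix $w\in\SHXo$ with $-1\leq w\leq 0$ and a small $s\in(0,1)$ depending on $\delta$ and $t$, and set
$$\psi_j=(1-s)v_j+s w-\tfrac{\delta}{4}.$$
This lies in $\EcXo$ by convexity of the class, and multilinearity gives $H_m(\psi_j)\geq s^m H_m(w)$. On the bounded piece, choosing $s$ small relative to $\delta$ and to the bounds on $v_j$ from the reduction step gives the inclusions
$$\{u_j<v_j-\delta\}\cap\{u_j\geq -t\}\subset\{u_j<\psi_j\}\subset\{u_j<v_j-\tfrac{\delta}{8}\},$$
both by elementary pointwise comparisons using $-1\leq w\leq 0$. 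Then Theorem \ref{comparison principle}(i) yields
$$s^m\int_{\{u_j<v_j-\delta\}\cap\{u_j\geq-t\}}H_m(w)\leq\int_{\{u_j<\psi_j\}}H_m(\psi_j)\leq\int_{\{u_j<\psi_j\}}H_m(u_j)\leq \int_{\{u_j<v_j-\delta/8\}}H_m(u_j).$$
By hypothesis ii) the right-hand side tends to $0$, uniformly in $w$.

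\textbf{Conclusion.} Taking the supremum over $w$ shows that $\Capm$ of the bounded piece tends to $0$ as $j\to\infty$ for each fixed $t$. Adding the small capacity of $\{u_j<-t\}$ and letting $t\to\infty$ gives $\lim_{j\to\infty}\Capm(\{u_j<v_j-\delta\})=0$ for every $\delta>0$. Hypothesis i) enters only through the uniform control needed in the reduction step.
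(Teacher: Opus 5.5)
There is a genuine gap, and it is not where you locate it. The step that fails is the second inclusion $\{u_j<\psi_j\}\subset\{u_j<v_j-\frac{\delta}{8}\}$.

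Since $w\le 0$, a point of $\{u_j<\psi_j\}$ satisfies $u_j<(1-s)v_j-\frac{\delta}{4}$. This gives $u_j<v_j-\frac{\delta}{8}$ only where $s v_j\ge -\frac{\delta}{8}$. Restricting to $\{u_j\ge -t\}$ affects only the left-hand set. The comparison principle integrates over all of $\{u_j<\psi_j\}$, and that set contains points where $u_j$ and $v_j$ are as negative as you like. Concretely, after your replacement $v_j\to\max(u_j,v_j)$, every point with $v_j\le u_j<-1-\frac{\delta}{4s}$ lies in $\{u_j<\psi_j\}$ but not in $\{u_j<v_j-\frac{\delta}{8}\}$.

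A uniform lower bound on $v_j$ is not available. In the paper's applications $v_j=u$ is a fixed, typically unbounded, function in $\EcXo$. Suppose you repair this by splitting off $\{u_j<-t\}$, or by comparing $\psi_j$ with $\max(u_j,-t)$. Then you are forced to take $s\lesssim \delta/t$, and after dividing by $s^m$ the best you get is
\[
\limsup_{j\to\infty}\Capm(\{u_j<v_j-\delta\})\lesssim\Bigl(\frac{t}{\delta}\Bigr)^m\sup_{j}\int_{\{u_j<-t\}}H_m(u_j).
\]
Hypothesis i) together with Proposition \ref{md2.12} only says that this supremum tends to $0$ as $t\to\infty$, with no rate. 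So the right-hand side need not tend to $0$.

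Tellingly, for uniformly bounded $v_j$ your argument works without hypothesis i) at all. Hypothesis i) is needed precisely for unbounded $v_j$, and the one-shot bound $H_m(\psi_j)\ge s^mH_m(w)$ cannot exploit it.

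The issues you flag are secondary:
\begin{itemize}
\item $\min(v_j,C)$ is not $(\omega,m)$-sh.
\item $\sup_X u_j$ cannot be bounded using i); take $u_j=v_j\equiv -j$. You do not need such a bound, since you may subtract $\sup_X u_j$ from both $u_j$ and $v_j$.
\item After that normalization, $\sup_X v_j$ is automatically bounded. Indeed $\{v_j>\delta\}\subset\{u_j<v_j-\delta\}$, and if $\sup_Xv_j\to\infty$ then $\Capm(\{v_j\le\delta\})\to 0$, which contradicts i) and ii).
\end{itemize}

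The missing idea is to exchange one factor at a time, and to put the cut-off on the $u_j$-side. The paper proves by induction on $k=0,\dots,m$ that $\sup_{\varphi}\int_{\{u_j<v_j-\delta\}}\omega_{u_j}^{m-k}\wedge\omega_\varphi^{k}\wedge\omega^{n-m}\to 0$, with the supremum over $\varphi\in\SHXo$, $-1\le\varphi\le 0$. It uses $u_{jt}=\max(u_j,-t)$ and
\[
\beta_j=\frac{u_j+\frac{\delta}{t}u_{jt}}{1+\frac{\delta}{t}},\qquad \eta_j=\frac{v_j+\frac{\delta}{t}\varphi-2\delta}{1+\frac{\delta}{t}}.
\]
The inclusions $\{u_j<v_j-3\delta\}\subset\{\beta_j<\eta_j\}\subset\{u_j<v_j-\delta\}$ use only $-t\le u_{jt}\le 0$ and $-1\le\varphi\le 0$, so no bound on $v_j$ is needed. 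The partial comparison principle, applied to a single factor, then yields
\[
\begin{aligned}
\int_{\{u_j<v_j-3\delta\}}\omega_{u_j}^{m-k}\wedge\omega_\varphi^{k}\wedge\omega^{n-m}
&\le\frac{t+\delta}{\delta}\int_{\{u_j<v_j-\delta\}}\omega_{u_j}^{m-k+1}\wedge\omega_\varphi^{k-1}\wedge\omega^{n-m}\\
&\quad+\int_{\{u_j\le -t\}}\omega_{u_{jt}}\wedge\omega_{u_j}^{m-k}\wedge\omega_\varphi^{k-1}\wedge\omega^{n-m}.
\end{aligned}
\]
The large factor $(t+\delta)/\delta$ now multiplies only the previous step's quantity, which tends to $0$ as $j\to\infty$ for each fixed $t$. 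The tail term has coefficient $1$ and is small uniformly in $j$ and $\varphi$ for large $t$, by i), Lemma \ref{lm3.4} and Proposition \ref{md2.12}. This is exactly how the paper avoids the $(t/\delta)^m$ loss that breaks your one-step argument.
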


\begin{proof} We can assume, by adding constants to $u_j$ if necessary, that $\sup\limits_X u_j =0$ for all $j\geq 1$. Set $u_{jt}:=\max (u_j, -t)$. For each $k=0,\ldots,m$ we will prove inductively that for every $\delta >0$
	\begin{equation}\label{1}\lim\limits_{j\to\infty}\sup\Bigl\{\int\limits_{\{u_j<v_j-\delta\}}\omega_{u_j}^{m-k}\w \omega_{\va}^k\wedge\omega^{n-m}:\ \va\in SH_m(X,\omega),\ -1\leq\va\leq 0\Bigl\}=0.
	\end{equation}
	
	\n Assume that (3.16) holds. Then set $k=m$ we infer that $\lim\limits_{j\to \infty} Cap_{\omega,m}(\{u_j< v_j-\delta\})=0$ and the desired conclusion follows.
	Note that if $k=0$ then (\ref{1}) holds from the assumption (ii). Assume that (\ref{1}) holds for $k-1$. We will prove that
	$$\lim\limits_{j\to\infty}\sup\Bigl\{\int\limits_{\{u_j<v_j-3\delta\}}\omega_{u_j}^{m-k}\w\omega_{\va}^k\wedge\omega^{n-m}:\ \va\in SH_m(X,\omega),\ -1\leq\va\leq 0\Bigl\}=0$$
	for any $\delta >0$.\\
	\n  We fix $t\geq 1$ and $\va\in SH_m (X,\omega)$ satisfying $-1\leq\va\leq 0$. Let $\eta_{j},\ \beta_{j}$ be the $(\omega, m)$-subharmonic functions defined respectively  by $\eta_j:=\frac{v_j+\frac {\delta}{t}\va -2\delta}{1+\frac {\delta} t},\ \beta_j:=\frac{u_j+\frac {\delta}{t}u_{jt}}{1+\frac {\delta} t}$ and put $U=\{\beta_j<\eta_j\}$. Note that we have $\beta_j,\eta_j\in \EcXo $ since $\EcXo$ is a convex set. It is easy to check that 
	$$\{u_j<v_j-3\delta\}\subset U\subset\{u_j<v_j-\delta\}.$$
	Then we have
	
	\begin{equation}\label{e3.3} \int\limits_{\{u_j<v_j-3\delta\}}\omega_{u_j}^{m-k}\w\omega_{\va}^k\wedge\omega^{n-m}\leq\int\limits_{U}\omega_{u_j}^{m-k}\w\omega_{\va}^k\wedge\omega^{n-m}.
	\end{equation}
	Moreover, we also have
	$$\omega_{u_j}^{m-k}\w\omega_{\va}^{k-1}\wedge\omega^{n-m}\w[\frac {t+\delta} {\delta}\omega_{\eta_j}-\o_{\va}]=\frac {t} {\delta}\o_{v_j}\w\omega_{u_j}^{m-k}\w\omega_{\va}^{k-1}\wedge\omega^{n-m}\geq 0.$$
	This implies that
	\begin{equation}\label{e3.4}
	\int_{U}\omega_{u_j}^{m-k}\w\omega_{\va}^k\wedge\omega^{n-m}\leq \int_{U}	\frac {t+\delta} {\delta}\omega_{\eta_j}\w\omega_{u_j}^{m-k}\w\omega_{\va}^{k-1}\wedge\omega^{n-m}.
	\end{equation}

\n By applying Theorem \ref{partial cp} and Theorem \ref{partial cmp}  we obtain 
	\begin{align*}
	&\frac {t+\delta} {\delta}\int\limits_{U}\omega_{\eta_j}\w\omega_{u_j}^{m-k}\w\omega_{\va}^{k-1}\wedge\omega^{n-m}\leq
		\frac {t+\delta} {\delta}\int\limits_{U}\omega_{\beta_j}\w\omega_{u_j}^{m-k}\w\omega_{\va}^{k-1}\wedge\omega^{n-m}\\
	&\leq\frac {t} {\delta}\int\limits_{\{u_j < v_j-\delta\}}\omega_{u_j}^{m-k+1}\w\omega_{\va}^{k-1}\wedge\omega^{n-m}
	+\int\limits_{\{u_j < v_j-\delta\}}\omega_{u_{jt}}\w\omega_{u_j}^{m-k}\w\omega_{\va}^{k-1}\wedge\omega^{n-m}.\\ 
	&\leq \frac {t} {\delta}\int\limits_{\{u_j < v_j-\delta\}}\omega_{u_j}^{m-k+1}\w\omega_{\va}^{k-1}\wedge\omega^{n-m}
		+\int\limits_{\{u_j < v_j-\delta\}}\omega_{u_j}^{m-k+1}\w\omega_{\va}^{k-1}\wedge\omega^{n-m}\\
	&+ \int\limits_{\{u_j < v_j-\delta\}}\omega_{u_{jt}}\w\omega_{u_j}^{m-k}\w\omega_{\va}^{k-1}\wedge\omega^{n-m} -\int\limits_{\{u_j < v_j-\delta\}}\omega_{u_j}^{m-k+1}\w\omega_{\va}^{k-1}\wedge\omega^{n-m}\\
	& \leq \frac {t+\delta} {\delta}\int\limits_{\{u_j < v_j-\delta\}}\omega_{u_j}^{m-k+1}\w\omega_{\va}^{k-1}\wedge\omega^{n-m}\\
	&+ \int\limits_{\{u_j < v_j-\delta\}\cap \{u_j\leq-t\}}\omega_{u_{jt}}\w\omega_{u_j}^{m-k}\w\omega_{\va}^{k-1}\wedge\omega^{n-m}\\
	&-\int\limits_{\{u_j < v_j-\delta\}\cap\{u_j\leq -t\}}\omega_{u_j}^{m-k+1}\w\omega_{\va}^{k-1}\wedge\omega^{n-m}\\
	& \leq \frac {t+\delta} {\delta}\int\limits_{\{u_j < v_j-\delta\}}\omega_{u_j}^{m-k+1}\w\omega_{\va}^{k-1}\wedge\omega^{n-m} +  \int\limits_{ \{u_j\leq-t\}}\omega_{u_{jt}}\w\omega_{u_j}^{m-k}\w\omega_{\va}^{k-1}\wedge\omega^{n-m},
	\end{align*} where the fourth inequality follows from the fact that
$$\ind_{\{u_j>-t\}}\omega_{u_{jt}}\w\omega_{u_j}^{m-k}\w\omega_{\va}^{k-1}\wedge\omega^{n-m}= \ind_{\{u_j>-t\}}\omega_{u_j}^{m-k+1}\w\omega_{\va}^{k-1}\wedge\omega^{n-m}.$$
	Combine the above equality, inequality \eqref{e3.3}, inequality \eqref{e3.4} and the induction hypothesis, we get
	
	\begin{align*}&\lim\limits_{j\to\infty}\sup\Bigl\{\int\limits_{\{u_j<v_j-3\delta\}}\omega_{u_j}^{m-k}\w\omega_{\va}^k\wedge\omega^{n-m}:\ \va\in SH_m(X,\omega),\ -1\leq\va\leq 0\Bigl\}\\
		&\leq \lim\limits_{j\to\infty}\sup\Bigl\{\int\limits_{\{u_j\leq -t\}}\omega_{u_{jt}}\w\omega_{u_j}^{m-k}\w\omega_{\va}^{k-1}\wedge\omega^{n-m}:\ \va\in SH_m (X,\omega),\ -1\leq\va\leq 0\Bigl\},\end{align*}
	
\n	for every $t\geq 1$.\\
	\n Note that since $\sup_{X}u_j=0$ for all $j\geq 1$ and $-1\leq \varphi\leq 0,$ we have  $$\tilde{u}_j:= \frac{u_{jt}+u_j+\va}{3}\geq \frac{2u_j-1}{3}\geq \frac{3u_j-1}{3} = u_j-\frac{1}{3}.$$ Therefore, according to the hypothesis (i) and Lemma \ref{lm3.4} we get that $H_m(\tilde{u}_j)\ll \Capm$ uniformly for $j\geq 1$. Hence, so is $3^m H_m(\tilde{u}_j)$. But it is not difficult to see that
	$$3^m H_m(\tilde{u}_j)\geq\omega_{u_{jt}}\w\omega_{u_j}^{m-k}\w\omega_{\va}^{k-1}\wedge\omega^{n-m}.$$
	
	\n It follows that $\omega_{u_{jt}}\w\omega_{u_j}^{m-k}\w\omega_{\va}^{k-1}\wedge\omega^{n-m}<< Cap_{\omega, m}$ uniformly for $j\geq 1$.
	
\n By Proposition \ref{md2.12} we get that 

$$Cap_{\omega,m}(\{u_j\leq -t\})\leq\frac{(2m+1)|\sup\limits_{X}u_j|+m}{t} = \frac{m}{t},$$

\n for all $j\geq 1$. Since $\omega_{u_{jt}}\w\omega_{u_j}^{m-k}\w\omega_{\va}^{k-1}\wedge\omega^{n-m}<< Cap_{\omega, m}$ uniformly for $j\geq 1$ then for $\varepsilon >0$ there exists $\delta>0$ such that for all Borel subset $E\subset X$ with $Cap_{\omega,m}(E)<\delta$ it follows that $\omega_{u_{jt}}\w\omega_{u_j}^{m-k}\w\omega_{\va}^{k-1}\wedge\omega^{n-m}(E)<\varepsilon$ for all $j\geq 1$. Using this we can choose $t_0>0$ such that for $t\geq t_0$ we get that  $Cap_{\omega,m}(\{u_j\leq -t\})<\delta$ for all $j\geq 1$. Hence, 
$$\int\limits_{\{u_j\leq -t\}}\omega_{u_{jt}}\w\omega_{u_j}^{m-k}\w\omega_{\va}^{k-1}\wedge\omega^{n-m}\leq\varepsilon,$$

\n for all $j\geq 1$ and $-1\leq \varphi\leq 0$. This yields that
$$\sup\Bigl\{\int\limits_{\{u_j\leq -t\}}\omega_{u_{jt}}\w\omega_{u_j}^{m-k}\w\omega_{\va}^{k-1}\wedge\omega^{n-m}:-1\leq\varphi\leq 0,\Bigl \}\leq\varepsilon,$$

\n for all $j\geq 1$. Therefore, 
$$\lim\limits_{j\to\infty}\sup\Bigl\{\int\limits_{\{u_j\leq -t\}}\omega_{u_j}^{m-k}\w\omega_{\va}^k\wedge\omega^{n-m}:\ \va\in SH_m(X,\omega),\ -1\leq\va\leq 0\Bigl\}=0$$
	as $t \to\infty.$ Thus, we have
	$$\lim\limits_{j\to\infty}\sup\Bigl\{\int\limits_{\{u_j<v_j-3\delta\}}\omega_{u_j}^{m-k}\w\omega_{\va}^k\wedge\omega^{n-m}:\ \va\in SH_m(X,\omega),\ -1\leq\va\leq 0\Bigl\}=0.$$ Thus the proof of Proposition \ref{pro3.7} is complete.
\end{proof}

\n As a direct corollary we obtain the following result.

\begin{theorem}\label{2.2} Let $u_j,v_j\in\mathcal{E}(X,\omega,m)$ be such that
	
	i) $H_m({u_j})+H_m(v_j)\ll \Capm$ uniformly for $j\geq 1$;

	ii) $\lim\limits_{j\to \infty}[ \int\limits_{\{u_j<v_j-\delta\}}H_m(u_j)+\int\limits_{\{v_j<u_j-\delta\}}H_m(v_j)]=0$ for all $\delta >0$.
	
	\n Then $u_j - v_j \to 0$ in $\Capm$.
\end{theorem}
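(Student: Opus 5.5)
The plan is to apply Proposition \ref{pro3.7} twice, once to the pair $(u_j,v_j)$ and once to the pair $(v_j,u_j)$. Afterwards we recombine the two one-sided capacity estimates using the subadditivity of $\Capm$.

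First I check the hypotheses of Proposition \ref{pro3.7} for the pair $(u_j,v_j)$. Hypothesis (i) of the present statement says $H_m(u_j)+H_m(v_j)\ll\Capm$ uniformly in $j$. Since $H_m(u_j)\le H_m(u_j)+H_m(v_j)$ and $H_m(v_j)\le H_m(u_j)+H_m(v_j)$ as positive measures, both families $\{H_m(u_j)\}$ and $\{H_m(v_j)\}$ are uniformly absolutely continuous with respect to $\Capm$: the same $\delta$ works for each. Hypothesis (ii) is a sum of two nonnegative terms tending to $0$, so each term tends to $0$ separately:
$$\lim_{j\to\infty}\int_{\{u_j<v_j-\delta\}}H_m(u_j)=0,\qquad \lim_{j\to\infty}\int_{\{v_j<u_j-\delta\}}H_m(v_j)=0$$
for every $\delta>0$. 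Proposition \ref{pro3.7} with $(u_j,v_j)$ then gives $\lim_j\Capm(\{u_j<v_j-\delta\})=0$. With the roles exchanged, $(v_j,u_j)$, it gives $\lim_j\Capm(\{v_j<u_j-\delta\})=0$. Both $u_j$ and $v_j$ lie in $\EcXo$, as that proposition requires.

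Next I use the identity
$$\{|u_j-v_j|>\delta\}=\{u_j<v_j-\delta\}\cup\{v_j<u_j-\delta\},$$
which holds up to the $(\o,m)$-polar set where both functions equal $-\infty$. That set has capacity zero by the characterization of polar sets recalled in the preliminaries, so it does not affect the estimate. Since $\Capm$ is countably subadditive (it is a supremum of measures),
$$\Capm(\{|u_j-v_j|>\delta\})\le\Capm(\{u_j<v_j-\delta\})+\Capm(\{v_j<u_j-\delta\})\to0 .$$
This is exactly the statement $u_j-v_j\to0$ in $\Capm$.

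All the substantive work is already contained in Proposition \ref{pro3.7}, so I expect no real obstacle here. The only point needing care is the normalization inside that proposition's proof, where one assumes $\sup_X u_j=0$ by adding constants. I would remark that this shift is applied to $u_j$ alone. It leaves hypotheses (i) and (ii) of Proposition \ref{pro3.7} unaffected, and the proposition's conclusion concerns the sets $\{u_j<v_j-\delta\}$ of the original, unshifted pair. So its conclusion can be quoted as stated, separately for each ordering of the pair.
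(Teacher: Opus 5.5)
Your argument is correct and follows the paper's route: the paper states this theorem as a direct corollary of Proposition \ref{pro3.7}, applied to the pairs $(u_j,v_j)$ and $(v_j,u_j)$ and then combined by subadditivity of $\Capm$. One correction to your closing remark: shifting $u_j$ alone would change the sets $\{u_j<v_j-\delta\}$, and hence hypothesis (ii). The harmless normalization in that proposition's proof subtracts $\sup_X u_j$ from both $u_j$ and $v_j$, which leaves the Hessian measures and these sets unchanged.
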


\n Our next result provides such a characterization in the class $\mathcal{E}(X,\omega, m)$.

\begin{theorem}\label{2.3} Let $u_j\in\mathcal{E}(X,\omega,m)$ and $u\in \SH_m (X,\omega)$. Then the following three statements are equivalent:
	
	i) $u\in\mathcal{E}(X,\omega,m)$ and $u_j\to u$ in $\Capm$;
	
	ii) $H_m({u_j})\ll \Capm$ uniformly for $j\geq 1$ and $u_j\to u$ in $\Capm$;
	
	iii) $H_m({u_j})\ll \Capm$ uniformly for $j\geq 1$, $\varlimsup\limits_{j\to\infty} u_j\leq u$ and
	$$\lim\limits_{j\to\infty}\int\limits_{\{u_j<u- \delta\}}H_m(u_j)=0, \forall \delta >0.$$
\end{theorem}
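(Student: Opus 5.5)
I would prove the cycle (i)$\Rightarrow$(ii)$\Rightarrow$(iii)$\Rightarrow$(i), relying mainly on Theorem \ref{th3.5} and Theorem \ref{2.2}.

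\textbf{(i)$\Rightarrow$(ii).} This is exactly the direction (i)$\Rightarrow$(ii) of Theorem \ref{th3.5}. There is one caveat: that theorem assumes $u\le 0$. Since $u$ is bounded above on the compact manifold $X$, I would subtract a constant $c$ from $u$ and from every $u_j$. This changes neither the Hessian measures, nor the membership in $\EcXo$, nor the sets $\{|u_j-u|>\ve\}$.

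\textbf{(ii)$\Rightarrow$(iii).} Theorem \ref{th3.5} applied in the reverse direction gives $u\in\EcXo$. For the inequality $\varlimsup_j u_j\le u$, I would pass through $L^1$. Convergence in capacity implies convergence in $\o^n$-measure, because $\o^n\le \Capm$ (take the competitor $u=0$ in the definition of $\Capm$). A subsequence therefore converges almost everywhere. Then, for fixed $x$, $(\varlimsup u_j)^*$ is $(\o,m)$-sh and equals $u$ almost everywhere, so $(\varlimsup u_j)^*\le u$. I expect this pointwise statement to be the delicate point. The version I would actually prove is the standard Hartogs-type one, $\varlimsup_j u_j\le u$ pointwise, which follows from the sub-mean value inequality for $\rho+u_j$ in local charts together with $L^1_{loc}$ convergence. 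For the last condition, I would use
$$\{u_j<u-\delta\}\subset\{|u_j-u|>\delta\}.$$
Then uniform absolute continuity of $H_m(u_j)$, combined with $\Capm(\{|u_j-u|>\delta\})\to0$, gives $\int_{\{u_j<u-\delta\}}H_m(u_j)\to0$.

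\textbf{(iii)$\Rightarrow$(i).} This is the main obstacle. I would first show that $u\in\EcXo$, and then get capacity convergence from Theorem \ref{2.2} with $v_j=u$.

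For the first part, I would use Lemma \ref{lm3.4}-type reasoning together with the hypothesis $\int_{\{u_j<u-\delta\}}H_m(u_j)\to 0$. Alternatively, I would set $w_j=\max(u_j,u-1)$, which lies in $\EcXo$ once $u$ does. This is somewhat circular, so instead I would argue as in (ii)$\Rightarrow$(i) of Theorem \ref{th3.5}. Consider $u_j':=(\sup_{k\ge j}u_k)^*$, which decreases to $(\varlimsup u_j)^*$. Using $\varlimsup u_j\le u$ and the $L^1$-compactness of $\{u_j\}$, with $\sup u_j$ bounded after normalizing, I would show that $(\varlimsup u_j)^*=u$. Then $u_j'\ge u_j$, and Lemma \ref{lm3.4} gives $H_m(u_j')\ll\Capm$ uniformly. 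Since $u_j'\searrow u$ converges in capacity, Theorem \ref{th3.5} yields $u\in\EcXo$.

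Once $u\in\EcXo$, Lemma \ref{lm3.2} gives $H_m(u)\ll\Capm$, so hypothesis (i) of Theorem \ref{2.2} holds with $v_j=u$. For hypothesis (ii) of Theorem \ref{2.2}, the $u_j$-term is given. For the remaining term $\int_{\{u<u_j-\delta\}}H_m(u)$, I would use $\varlimsup u_j\le u$ and Hartogs' lemma on $X\setminus U$, where $U$ is the small-capacity open set from quasicontinuity (Proposition \ref{quasicontinuity}). On the compact set $X\setminus U$, $u$ is continuous, so eventually $u_j<u+\delta$ there. Hence that term is at most $H_m(u)(U)$, which is small. Theorem \ref{2.2} then gives $u_j\to u$ in $\Capm$.
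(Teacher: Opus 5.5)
\textbf{Gap in (iii)$\Rightarrow$(i): the identity $(\varlimsup_j u_j)^*=u$ is unsupported, and the argument around it is circular.}

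Your (i)$\Rightarrow$(ii) and (ii)$\Rightarrow$(iii) are fine and agree with the paper. The paper obtains (i)$\Leftrightarrow$(ii) from Theorem~\ref{th3.5} and treats (ii)$\Rightarrow$(iii) as immediate. The problem is your claim in (iii)$\Rightarrow$(i) that $\varlimsup_j u_j\le u$ together with $L^1$-compactness yields $(\varlimsup_j u_j)^*=u$. These facts only give ``$\le$''. For instance, $u\equiv 0$ and $u_j\equiv -1$ satisfy $H_m(u_j)=\omega^n\ll\Capm$ uniformly and $\varlimsup_j u_j\le u$, yet $(\varlimsup_j u_j)^*=-1$.

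The reverse inequality says that $\{u_j<u-\delta\}$ becomes small. It must come from the hypothesis $\int_{\{u_j<u-\delta\}}H_m(u_j)\to 0$, which fails in this example, and you never say how to use that hypothesis at this stage. The only tool in the paper that turns small $H_m(u_j)$-mass of $\{u_j<u-\delta\}$ into small capacity of that set is Proposition~\ref{pro3.7}. You invoke it, through Theorem~\ref{2.2} with $v_j=u$, only after $u\in\EcXo$ is known. But you were deriving $u\in\EcXo$ from $(\varlimsup_j u_j)^*=u$, so the argument is circular.

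\textbf{The fix is the max construction you discarded.} It is not circular. The maximum of an $(\omega,m)$-sh function with an element of $\EcXo$ lies in $\EcXo$ (the stability under $\max$ recalled in Section~2.2), whatever the other function is. So put $v_j:=\max(u_j,u)\in\EcXo$. Then $\{u_j<v_j-\delta\}=\{u_j<u-\delta\}$, and Proposition~\ref{pro3.7} applies directly: its hypotheses are only the uniform absolute continuity of $H_m(u_j)$ and the mass condition. It gives $\Capm(\{u_j<u-\delta\})\to 0$ with no information on $u$.

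The upper side then comes from $\varlimsup_j u_j\le u$. You can use your Hartogs/quasicontinuity argument on $X\setminus U$. Alternatively, use the decreasing sequence $u_j'=(\sup_{k\ge j}u_k)^*$, whose limit is $\le u$, together with the monotone-convergence proposition of Section~2.3. Either way you get $u_j\to u$ in $\Capm$, and Theorem~\ref{th3.5} then gives $u\in\EcXo$.

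This is essentially the paper's proof of (iii)$\Rightarrow$(ii): Proposition~\ref{pro3.7} handles $\{u_j<u-\delta\}$, and $\varlimsup_j u_j\le u$ handles the other side. Note that the paper applies Proposition~\ref{pro3.7} with $v_j=u$, which formally also presupposes $u\in\EcXo$. The choice $v_j=\max(u_j,u)$ is what makes that step legitimate.
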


\begin{proof} (i) $\Leftrightarrow$ (ii) It follows from Theorem \ref{th3.5}.
	
	(ii) $\Rightarrow$ (iii) This implication is trivial.
	
	iii) $\Rightarrow$ ii) By Proposition \ref{pro3.7}, we have $\lim\limits_{j\to\infty}\Capm (\{u_j<u-\delta\})=0$ for all $\delta >0$. So coupling this with $\varlimsup\limits_{j\to\infty} u_j\leq u$ we infer that  $u_j\to u$ in $\Capm$.
\end{proof} 
Recall that the weak convergence of Hessian measures does not imply the weak convergence of the corresponding functions (and reverse). Hence, such convergence problems should be added more suitable conditions. Usually results in this direction are called stability theorems in the literature. In \cite{CK06} good convergence properties were obtained in the domains of $\mathbb C^n$ under the assumption that all the Monge-Amp\`ere measures are dominated by a fixed measure vanishing on pluripolar sets. In the K\"ahler setting the corresponding problem was studied Xing in \cite{Xi09} in $\mathcal{E}^1(X,\omega)$. We will prove a similar result for the  $\mathcal{E}(X,\omega, m)$ which is the main result of the paper. 

\begin{theorem} Let $u_j,u\in\mathcal{E}(X,\omega,m)$. Assume that $H_m({u_j})\leq \mu$ for all $j\geq 1$ and $H_m(u)\leq \mu$ where $\mu$ is a positive measure satisfying $\mu(X)=\int_{X}\o^n=1$ and $\mu\ll \Capm$. Let also $\sup\limits_Xu_j=\sup\limits_Xu=0$. Then the following three statements are equivalent:
	
	i) $u_j\to u$ in $\Capm$;
	
	ii) $u_j\to u$ in $L^1(X,\omega)$;
	
	iii)  $H_m(u_j)\to H_m(u)$ weakly as $j\to\infty$.
\end{theorem}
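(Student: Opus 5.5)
I would prove the cycle (i) $\Rightarrow$ (iii) $\Rightarrow$ (ii) $\Rightarrow$ (i), or more economically (i) $\Rightarrow$ (iii), (i) $\Rightarrow$ (ii), and (ii) $\Rightarrow$ (i), (iii) $\Rightarrow$ (i).

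The implication (i) $\Rightarrow$ (iii) is exactly Theorem \ref{th3.1}. For (i) $\Rightarrow$ (ii), the functions are normalized by $\sup_X=0$, so by the standard compactness of normalized $(\o,m)$-sh functions (as in \cite{DC15}) they lie in a relatively compact subset of $L^1$ and are uniformly bounded in $L^p$ for some $p>1$, in fact in every $L^p$. We split $\int_X|u_j-u|\o^n$ over $\{|u_j-u|\le\delta\}$ and its complement. The first piece is at most $\delta$. For the second, $\o^n\le \Capm$ (take the competitor $0$), so its measure tends to zero, and Hölder with the uniform $L^2$ bound controls the integral.

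The substantive direction is to pass from (ii) or (iii) to (i). Here I would use Theorem \ref{2.2} with $v_j=u$. Hypothesis (i) of that theorem holds since $H_m(u_j)+H_m(u)\le 2\mu$ and $\mu\ll\Capm$. It remains to show
$$\lim_{j\to\infty}\Bigl[\int_{\{u_j<u-\delta\}}H_m(u_j)+\int_{\{u<u_j-\delta\}}H_m(u)\Bigr]=0.$$
Both integrands are dominated by $\mu$, so it suffices to show that $\mu(\{|u_j-u|>\delta\})\to 0$, i.e.\ that $u_j\to u$ in $\mu$-measure.

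Assuming (ii), I would mimic Xing/Cegrell--Kołodziej. Since $\mu\ll\Capm$ and $\Capm(\{u_j<-t\})\le m/t$ by Proposition \ref{md2.12}, the functions $u_j$ are uniformly $\mu$-integrable. It then suffices to show $\int|u_j-u|\,d\mu\to0$. For a subsequence, $L^1$ convergence gives $(\sup_{k\ge j}u_k)^*\searrow u$, a monotone sequence, hence convergent in $\Capm$ and therefore in $\mu$-measure. This yields $\limsup\int u_j\,d\mu\le\int u\,d\mu$ and controls $\int (u_j-u)^+d\mu$. The lower bound $\int u_j\,d\mu\to\int u\,d\mu$ is the \textbf{main obstacle}. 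I would first try using $H_m(u)\le\mu$ together with the weak convergence $H_m(u_j)\to H_m(u)$ in an energy-type argument. If that fails, I would use the envelopes $w_j=(\inf_{k\ge j}\max(u_k,u-1))$ regularized, which lie above $u-1$ and hence in $\EcXo$; their Hessian measures are controlled by $\mu$ through Theorem \ref{comparison principle}(ii) on the sets where the max is attained. They increase to $u$ a.e., hence in capacity, which gives $\int(u-u_j)^+d\mu\to0$. The statement then follows for the whole sequence by the subsequence principle.

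Assuming (iii), I would argue by subsequences. By compactness, any subsequence has a further subsequence with $u_j\to v$ in $L^1$, where $\sup v=0$. By the previous paragraph (applied to $v$, which also satisfies $H_m(v)\le\mu$ after verifying $v\in\EcXo$ via Theorem \ref{th3.5}), we get $u_j\to v$ in $\Capm$. Theorem \ref{th3.1} then gives $H_m(v)=H_m(u)$. Uniqueness in the Dinew--Chinh/Chinh--Dong solution of the Hessian equation in $\EcXo$ (up to constants, fixed by $\sup=0$) gives $v=u$. This yields (ii), and hence (i).
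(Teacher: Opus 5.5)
Your architecture is the paper's. You do (ii)$\Rightarrow$(i) via Theorem \ref{2.2} with $v_j=u$, and use the domination $H_m(u_j),H_m(u)\le\mu$ to reduce its hypothesis (ii) to convergence in $\mu$-measure. You treat (iii) by $L^1$-compactness, Theorem \ref{th3.1} and uniqueness. The half $\mu(\{u_j>u+\delta\})\to0$, via $(\sup_{k\ge j}u_k)^*\searrow u$, is fine.

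The gap is the other half, $\mu(\{u_j<u-\delta\})\to0$. You flag it as the main obstacle but never prove it, and every nontrivial direction of your plan passes through it. Option (a) is circular: under (ii) alone you do not know $H_m(u_j)\to H_m(u)$, since that is (iii). Option (b) is not an argument:
\begin{itemize}
\item $\inf_{k\ge j}\max(u_k,u-1)$ is not $(\omega,m)$-sh.
\item Theorem \ref{comparison principle}(ii) only identifies $H_m(\max(\varphi,\psi))$ on the strict sets. It gives no bound by $\mu$ on the contact sets $\{u_k=u-1\}$, where a maximum can create mass (e.g.\ $\max(\mathrm{Re}\,z_1,-\mathrm{Re}\,z_1)$).
\item The claim that the envelopes increase to $u$ a.e.\ is essentially the stability statement itself.
\end{itemize}

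The missing ingredient is a Cegrell-type lemma, which the paper imports as Lemma 4.6 of [PDjmaa]. It is applied in charts, using that $\mu\ll\Capm$ forces $\mu$ not to charge $m$-polar sets. The lemma says that for every $M$, $\int_X|\max(u_j,M)-\max(u,M)|\,d\mu\to0$. It follows from Banach--Saks:
\begin{itemize}
\item The truncations are bounded in $L^2(\mu)$. So any subsequence has a further subsequence converging weakly to some $w$, whose Ces\`aro means converge in $L^2(\mu)$ and, along a further subsequence, $\mu$-a.e.
\item These means are $(\omega,m)$-sh and converge to $\max(u,M)$ in $L^1$. Hence their $\limsup$ equals $\max(u,M)$ off a negligible, hence $(\omega,m)$-polar, set.
\item Thus $w=\max(u,M)$ $\mu$-a.e., and $\int\max(u_j,M)\,d\mu\to\int\max(u,M)\,d\mu$. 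Hartogs' lemma plus dominated convergence handles the positive part.
\end{itemize}
Then $\mu(\{u_j<u-\delta\})\le\mu(\{u_j<M\})+\delta^{-1}\int_X|\max(u_j,M)-\max(u,M)|\,d\mu$. The first term is small uniformly in $j$ by Proposition \ref{md2.12} and $\mu\ll\Capm$.

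There are three smaller points.

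First, the $u_j$ are not uniformly $\mu$-integrable. The bounds $\mu\ll\Capm$ and $\Capm(\{u_j<-t\})\le m/t$ give $\sup_j\mu(\{u_j<-t\})\to0$ with no rate. In fact $u$ need not lie in $L^1(\mu)$: take $\mu=H_m(u)$ for $u\in\EcXo$ of infinite energy, which satisfies $\mu\ll\Capm$ by Lemma \ref{lm3.2}. So $\int u_j\,d\mu\to\int u\,d\mu$ is not the right target, and you must work with truncations as above.

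Second, in (i)$\Rightarrow$(ii) there is no uniform bound in every $L^p$ when $m<n$. The local model $-|z|^{2-2n/m}$ is not in $L^{nm/(n-m)}_{loc}$, and for $m=1$, $n\ge2$ it is not even in $L^2$. However, relative compactness of $\{\varphi\in\SHXo:\sup_X\varphi=0\}$ in $L^1$ gives uniform $\omega^n$-integrability. With $\omega^n\le\Capm$ and Vitali, your direct proof then works, and it is more economical than the paper's (i)$\Rightarrow$(iii)$\Rightarrow$(ii).

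Third, in (iii) you cannot verify $v\in\EcXo$ through Theorem \ref{th3.5} applied to $u_j$. Convergence $u_j\to v$ in capacity is what is being proved, and $H_m(v)\le\mu$ is not yet available. The paper instead applies Theorem \ref{th3.5} to $g_j^*=(\sup_{k\ge j}u_k)^*\searrow v$. It obtains uniform absolute continuity of $H_m(g_j^*)$ from the estimate in Lemma \ref{lm3.2} together with $\mu(\{|u_j-v|>1\})\to0$, which again needs the truncated lemma. After that, $u_j\to v$ in capacity can be obtained one-sidedly: Proposition \ref{pro3.7} on $\{u_j<v-\delta\}$ and Hartogs' lemma on $\{u_j>v+\delta\}$.
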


\begin{proof}
	\n ii) $\Rightarrow$ i)  Since $u_j\to u$ in $L^1(X,\omega)$ we will prove that  
	\begin{equation}\label{eq3.19}\forall M\in{\mathbb R},  \lim_{j\to\infty}\int\limits_X|\max(u_j,M)-\max(u, M)|d\mu=0.
		\end{equation}
	Indeed, since $X$ is a compact K\"ahler manifold, we only need to prove the above equality on a local chart $U.$ Assume that $\o=dd^c\rho$ on $U$, where $\rho$ is a strictly plurisubharmonic function on $U$. Then we have $\max(u_j,M)+\rho$ and $\max(u,M)+\rho$ are $m$-subharmonic functions on $U$. We have 
	$$|\max(u_j,M)-\max(u, M)|=|(\max(u_j,M)+\rho)-(\max(u,M)+\rho) | .$$
	 Repeating the same arguments as in Proposition 3.10 in \cite{GZ05}, there exist $C\geq 1$ such that $$\frac{1}{C}\Capm(.)\leq \text{Cap}_{BT}(.)\leq C.\Capm(.),$$ where if $U$ is a local chart of $X$ and $E\subset U$ is a Borel subset of $U$ then
	$$\text{Cap}_{BT}(E,U):=\sup\Big\{\int_{E}(dd^cu)^{m}\wedge\omega^{n-m}: u\in SH_{m}(U), 0\leq u\leq 1\Big\}.$$
	Thus, from $\mu\ll\Capm$ it follows that $\mu\ll \text{Cap}_{BT}.$ Hence, we obtain that $\mu$ puts no mass on all $m$-polar sets on $U.$ \\
	\n Lemma 4.6 in \cite{PDjmaa} implies that 
	$$\lim_{j\to\infty}\int\limits_U[(\max(u_j,M)+\rho)-(\max(u, M)+\rho)]d\mu=0,$$
	and  equality \eqref{eq3.19} is proved. \\
	We claim that
	$$\begin{aligned}&\int\limits_{\{u_j<u-\delta\}}H_m(u_j)+\int\limits_{\{u<u_j-\delta\}}H_m(u)\\&\leq\int\limits_{\{u_j<M\}}d\mu+\int\limits_{\{u<M\}}d\mu+\frac1{\delta}\int\limits_{X}|\max(u_j,M)-\max(u, M)|d\mu.\hskip1cm(**)
	\end{aligned}$$
\n Indeed, we have 
\begin{equation}\label{e3.19}
\begin{aligned}
&	\int\limits_{\{u_j<u-\delta\}}H_m(u_j)=\int\limits_{\{u_j<u-\delta\}\cap\{u_j<M\}}H_m(u_j)+\int\limits_{\{u_j<u-\delta\}\cap\{u_j\geq M\}}H_m(u_j)\\
&\leq \int\limits_{\{u_j<M\}}d\mu +\int\limits_{\{u_j<u-\delta\}\cap\{u_j\geq M\}}d\mu\\
&\leq \int\limits_{\{u_j<M\}}d\mu + \int\limits_{\{u_j<u-\delta\}\cap\{u_j\geq M\}}\frac1{\delta}|\max(u_j,M)-\max(u, M)|d\mu.
	\end{aligned}
\end{equation}
Similarly, we also have
\begin{equation}\label{e3.20}
	\begin{aligned}
	&	\int\limits_{\{u<u_j-\delta\}}H_m(u)=\int\limits_{\{u<u_j-\delta\}\cap\{u<M\}}H_m(u)+\int\limits_{\{u<u_j-\delta\}\cap\{u\geq M\}}H_m(u)\\
	&\leq \int\limits_{\{u<M\}}d\mu +\int\limits_{\{u<u_j-\delta\}\cap\{u\geq M\}}d\mu\\
	&\leq \int\limits_{\{u<M\}}d\mu + \int\limits_{\{u<u_j-\delta\}\cap\{u\geq M\}}\frac1{\delta}|\max(u_j,M)-\max(u, M)|d\mu.
	\end{aligned}
\end{equation}
Note that $\Big( \{u_j<u-\delta\}\cap\{u_j\geq M\}\Big)\cap \Big(\{u<u_j-\delta\}\cap\{u\geq M\}\Big)=\emptyset.$ Thus, from inequality \eqref{e3.19} and inequality \eqref{e3.20} it follows that (**) is proved. Hence,
	if $M$\ is chosen sufficiently negative, by the assumption $\mu\ll \Capm$\ and Proposition \ref{md2.12} we infer that the first term and the second term on the right-hand side of (**) can be made arbitrarily small independently of $j$. Hence,  we get that 
	$$\lim_{j\to\infty}\Bigl\{\int\limits_{\{u_j<u-\delta\}}H_m(u_j)+\int\limits_{\{u<u_j-\delta\}}H_m(u)\Bigl\}=0.$$
	Moreover, since 
	 $H_m(u_j)\leq\mu, H_m(u)\leq\mu$ and $\mu\ll\Capm$, we imply  that $H_m(u_j)+H_m(u)\ll \Capm$.\\
	Thus, by Theorem \ref{2.2} we have $u_j\to u$\ in $\Capm$.
	
	\n ii) $\Rightarrow$ iii) 
	For any $\delta>0,$ we have
	\begin{align*}
	&\int_{\{u_j<u-\delta\}}H_m(u_j)\\
	&=\int_{\{u_j<u-\delta\}\cap [\{u<-k\}\cup\{u_j<-k\}] }H_m(u_j)+ \int_{\{u_j<u-\delta\}\cap\{u\geq -k\}\cap\{u_j\geq -k\}}H_m(u_j)\\
	&\leq \int_{\{u<-k\}\cup\{u_j<-k\}}\mu+{1\over \delta}\,\int_X|\max(u,-k)-\max(u_j,-k)|\,\mu.
	\end{align*}
	\n It follows from Proposition \ref{md2.12} that there exists $c>0$ such that $\Capm(u_j<-k)+\Capm(u<-k)\leq \frac{c}{k}$ for all $j$ and $k>0$. Since $\mu\ll \Capm$, we obtain that $\int_{\{u<-k\}\cup\{u_j<-k\}}\mu\longrightarrow 0$ as $k\to\infty$ uniformly for all $j$. So
	for any $\varepsilon>0$ there exists $k_\varepsilon>0$ such that 
	$$\int_{u_j<u-\delta}H_m({u_j})\leq  \varepsilon+{1\over \delta}\,\int_X|\max(u,-k_\varepsilon)-\max(u_j,-k_\varepsilon)|\,\mu, \;\; \forall j.$$
	Repeating the same argument as in inequality \eqref{eq3.19} we have 
	$$ \lim\limits_{j\to\infty}\int_X|\max(u,-k_\varepsilon)-\max(u_j,-k_\varepsilon)|\,d\mu=0.$$	
 Thus, we obtain
	$$\lim\limits_{j\to\infty}\int_{u_j<u-\delta}H_m(u_j)= 0.$$
\n 	By Proposition \ref{pro3.7} we get that \begin{equation}\label{e3.22}\lim\limits_{j\to\infty}\Capm(\{u_j<u-\delta \})=0.\end{equation}
\n Moreover, according to Proposition \ref{quasicontinuity}, there exists a open subset $U\subset X$ such that $\Capm(U)<\e$ and $u$ is continuous on $X\smallsetminus U.$
 Then  from $u_j\to u$ in $L^1(X,\omega)$ and by Theorem {\bf 3.2.13} in \cite{Ho94}( Hartog's Lemma) applying to the compact $K=X\setminus U$ it follows that there exists $j_0$ such that for $j\geq j_0$ we have $\{u_j> u+\delta\}\subset U.$ This means that we have 	\begin{equation}\label{e3.23}\Capm(\{u_j> u+\delta\})<\e, \end{equation}
 \n for $j\geq j_0$. 
Coupling equality \eqref{e3.22} and inequality \eqref{e3.23} we imply that $u_j$ converges to $u$ in $\Capm.$ By Theorem \ref{th3.1} we obtain
	$$H_m(u_j)\to H_m(u) \text{ weakly as } j\to\infty.$$
	
	\n  iii) $\Rightarrow$ ii)  Repeating the same argument as in Proposition 2.7 in \cite{GZ05}, from any subsequence of $\{u_j\}$, we can extract a further subsequence of the orginal subsequence that converges in $L^1$ to some function $v\in SH_m (X,\omega)$ and $\sup\limits_Xv=0$. Since the chosen convergent subsequence is arbitrary, it remains to show that $v=u.$ \\
	We claim now that $v$ belongs to $\mathcal{E}(X,\omega,m).$ For convenience of presentation, we can assume that $u_j\to v$ in $L^1(X,\omega)$ since the reasoning for the general case remains unchanged. Choosing a subsequence if necessary, we can assume that $u_j\to v$ almost everywhere in $X$ with respect to the smooth form $\omega^n$. Let $g_j=\max(u_j,u_{j+1},\dots ).$ Then its upper semicontinuous regularization $g_j^*$ satisfies that $0\geq g_j^*\geq u_j$ in $X$ and hence $g_j^*\in  {\cal E}(X,\omega,m)$. It is easy to see that $g^*_j\searrow v$ and hence $g^*_j\to v$ in $\Capm$ on $X$. According to Theorem \ref{th3.5}, we need to show that $H_m(g_j^*)\ll \Capm$ uniformly for $j\geq 1$ to get the claim $v\in\EcXo.$ Indeed, given $E\subset X$ and $k>0$, repeating the arguments as in the proof of Lemma \ref{lm3.2} we have
	
	$$\begin{aligned}\int_EH_m(g_j^*)&\leq 2^m\int_{\{u_j< -k+1\}}H_m(u_j)+2^mk^m\Capm(E)\\
		&\leq 2^m\mu\bigl(v<-k+2\bigr)+2^m\mu\bigl(|u_j-v|>1\bigr)+2^mk^m\Capm(E).\end{aligned}$$
\n We see that $\lim\limits_{k\to\infty}\mu\bigl(v<-k+2\bigr)=0.$ Then, for $\ve>0,$ there exists $k_0$ such that $2^m\mu\bigl(v<-k_0+2\bigr)\leq \frac{\ve}{3}.$ Moreover, $2^mk^m_0\Capm(E)\leq \frac{\ve}{3} $ when $\Capm(E)\leq \delta =\frac{\ve}{3.2^mk^m_0}.$\\
\n Thus, it remains to prove that there exists $j_0$ such that $\mu\bigl(|u_j-v|>1\bigr)\leq \ve_0=\frac{\ve}{3.2^m}$ for all $j\geq j_0$ to get $H_m(g_j^*)\ll \Capm$ uniformly for $j\geq 1$. Indeed, from $\Capm(u_j<-t)+\Capm(v<-t)\leq \frac{c}{t}$ and $\mu\ll \Capm$, there exists $t_0$ such that  $\mu(\{u_j<-t_0\})\leq \frac{\ve_0}{2} $ and $\mu(\{v<-t_0\})\leq \frac{\ve_0}{2}$. We will check that 
	\begin{equation*}\mu\bigl(|u_j-v|>1\bigr)\leq \mu\bigl(|\max(u_j,-t_0)-\max(v,-t_0)|>1\bigr)+\frac{\ve_0}{2}.\tag{***}\end{equation*}
	\n Obviously, inequality (***) is true on the sets $\Big[\{u_j\geq -t_0\}\cap \{v\geq -t_0\}\Big]\cup\Big[\{u_j< -t_0\}\cap \{v< -t_0\} \Big].$ On the set $\Big[\{u_j\geq -t_0\}\cap \{v< -t_0\}\Big],$ in equality (***) is equivalent to $\mu(\{u_j-v>1\})\leq\mu(\{u_j+t_0>1\})+\frac{\ve_0}{2}.$ This is true since we have inclusion $\{u_j-v>1\}\subset\Big[\{u_j+t_0>1\}\cup\{v< -t_0\} \Big].$ We use the same argument for the set $\Big[\{u_j< -t_0\}\cap \{v\geq -t_0\}\Big].$ Thus, inequality (***) is true.\\
	Therefore, we have
	$$\begin{aligned}\mu\bigl(|u_j-v|>1\bigr)&\leq\mu\bigl(|\max(u_j,-t_0)-\max(v,-t_0)|>1\bigr)+\frac{\ve_0}{2}\\
		&=\int_{\bigl(|\max(u_j,-t_0)-\max(v,-t_0)|>1\bigr)}d\mu+\frac{\ve_0}{2}\\
		&\leq \int_{\bigl(|\max(u_j,-t_0)-\max(v,-t_0)|>1\bigr)}|\max(u_j,-t_0)-\max(v,-t_0)|d\mu +\frac{\ve_0}{2}\\
	&\leq \int_{X}|\max(u_j,-t_0)-\max(v,-t_0)|d\mu +\frac{\ve_0}{2}.
	\end{aligned}$$
	 Repeating the argument as in the proof of equation \eqref{eq3.19} we have that
	 $$\lim\limits_{j\to\infty}\int_{X}|\max(u_j,-t_0)-\max(v,-t_0)|d\mu=0. $$
	 Hence, there exist $j_0$ such that for all $j\geq j_0$ we have
	 $$\int_{X}|\max(u_j,-t_0)-\max(v,-t_0)|d\mu \leq \frac{\ve_0}{2}. $$
	 Therefore, we have $\mu\bigl(|u_j-v|>1\bigr)\leq \ve_0$ for all $j\geq j_0$, which implies that $H_m(g_j^*)\ll \Capm$ and $v\in\EcXo.$\\
	\n Repeating the argument as in the proof  of the implication (ii) $\Rightarrow $ (i) we infer that $u_j\to v$ in $\Capm.$ According to Theorem \ref{th3.1} we imply that $H_m(u_j)$ converges weakly to $H_m(v)$ as $j\to\infty.$ Hence, we have $H_m(u)=H_m(v).$ By Theorem 4.1 in \cite{DC15} we deduce that $u=v.$
	
	\n i) $\Rightarrow$ ii) Since $u_j\longrightarrow u$ in $Cap_{m,\omega}$ and by Theorem \ref{th3.1} we have $H_m(u_j)\longrightarrow H_m(u)$ weakly as $j\to\infty$. Using the implication iii)$\Rightarrow$ ii) we get that $u_j\longrightarrow u$ in $L^1(X,\omega)$. Hence, the proof of Theorem 3.10 is complete. 
\end{proof}

\section*{Declarations}
\subsection*{Ethical Approval}
This declaration is not applicable.
\subsection*{Competing interests}
The authors have no conflicts of interest to declare that are relevant to the content of this article.
\subsection*{Authors' contributions }
Le Mau Hai and Nguyen Van Phu together studied  the manuscript.
%\subsection*{Funding }
%No funding was received for conducting this study.
\subsection*{Availability of data and materials}
This declaration is not applicable.

\end{document}